\documentclass[a4paper,12pt]{article}
\usepackage[utf8]{inputenc}
\usepackage[english]{babel}
\usepackage{csquotes} 
\usepackage{amsfonts} 
\usepackage{amsmath} 
\usepackage{amsthm} 
\usepackage{mathtools}
\usepackage{enumitem}
\usepackage[normalem]{ulem} 

\usepackage{graphicx} 
\graphicspath{ {Immagini/} } 
\usepackage{wrapfig} 
\usepackage{subcaption} 
\usepackage[font={footnotesize,it}]{caption} 
\usepackage{hyperref} 
\usepackage[width=150mm,top=25mm,bottom=25mm,headheight=14.5pt]{geometry}
\usepackage{todonotes}
\usepackage[backend=biber,style=numeric,useprefix]{biblatex}
\theoremstyle{plain}
\newtheorem{theorem}{Theorem}[section]

\newtheorem{lemma}{Lemma}[section]

\newtheorem*{observation}{Remark}

\title{Linear error bounds for HJB equations in finite horizon control problems.}
\author{Alessandro Alla\thanks{Sapienza, Università di Roma, Italy \texttt{\{alessandro.alla, filippo.mayer\}@uniroma1.it}} \and Filippo Mayer\footnotemark[1]}
\date{February 2026}

\begin{document}

\maketitle

\abstract{%
We study semi-Lagrangian approximation schemes for Hamilton–Jacobi–Bel\-lman equations arising from finite horizon optimal control problems. Classical error estimates for these schemes include the term $\frac{1}{\Delta t}$ which leads to pessimistic convergence bounds and is not observed in numerical experiments. In this work, we provide improved error estimates under standard regularity assumptions on the dynamics, the running cost, and the final cost, assuming the presence of a positive discount factor. The new bound depends linearly on the time step, the spatial mesh size, and a measure of the temporal oscillation of the control, thus removing the mixed term appearing in previous analyses. The proof relies on a refined comparison between continuous and discrete cost functionals and on stability estimates for the controlled dynamics. Numerical experiments confirm first-order convergence in both space and time and suggest that the improved behavior persists even in the undiscounted case.
}

\section{Introduction}

Hamilton–Jacobi–Bellman (HJB, \cite{bardicapuzzodolcetta}) equations play a central role in optimal control theory, providing a characterization of the value function through the Dynamic Programming Principle (DPP, \cite{bellman}). Due to their nonlinearity, however, in most practical situations explicit solutions of HJB equations are not available, and numerical approximation schemes are required. Among these, semi-Lagrangian (SL, \cite{F,Fec,falconeferretti}) schemes have proven to be particularly effective, thanks to their stability properties, their ability to handle general nonlinear dynamics, and the absence of restrictive CFL conditions. Finally, this approach provides the optimal control in feedback form which is fundamental in applications due to its robustness with respect to noise and error measurements.

For finite horizon optimal control problems, the semi-Lagrangian approximation introduced in the seminal work \cite{falconegiorgi} provides a consistent and monotone discretization of the value function showing an order of convergence of order $O(\frac{\Delta x}{\Delta t} + \Delta t)$ provided that $\Delta x$ and $\Delta t$ are the spatial and temporal step size respectively.

The order of convergence found is classical for SL schemes, even if numerical results usually show more accurate approximation than the theoretical bounds. Recently, sharper error estimates have been obtained for infinite horizon discounted control problems in \cite{DN}, showing that the semi-Lagrangian approximation converges with first order in both space and time, without the appearance of the term $\frac{1}{\Delta t}$. These results rely on the contractive structure induced by the discount factor and an appropriate discretization of the cost functional which helps us to provide the bounds otherwise difficult to obtain. However, performing this analysis to finite horizon problems is more complicated, since the direct dependence on time of the value function has to be carefully taken into account in order to get analogous results.

The aim of this work is to improve the error estimates studied in \cite{falconegiorgi} for SL schemes within the framework of finite horizon optimal control problems adapting the approach proposed in \cite{DN}. Under standard regularity assumptions on the dynamics, running cost, and final cost, and assuming a positive discount factor, we prove that the numerical value function satisfies the error bound $O(\Delta t + \Delta x + M_u)$, where $M_u$ goes to zero if regularity assumptions on the controls are assumed (more details will be discussed in Section \ref{sec:3}).


The proof relies on an error bound on the cost functional provided an appropriate discrete version of it. To obtain this bound, we exploit the regularity assumptions on the dynamics, the running cost and the final cost and standard tools used in differential equation analysis such as Gronwall's lemma.

Finally, we validate the theoretical results through numerical experiments. In particular, we consider a nonlinear control problem with an explicit value function, for which the experimental order of convergence confirms first-order accuracy in both space and time. Additional tests suggest that the improved convergence behavior persists even in the undiscounted case, indicating possible directions for future extensions of the theory.\\

The remainder of this paper is structured as follows. Section \ref{sec:2} recalls the optimal control problem, its discretization and the error estimates provided in \cite{falconegiorgi}. Section \ref{sec:3} contains the core of this paper. We provide error analysis for the discretized finite horizon control problem. Numerical results to confirm our theoretical findings are provided in Section \ref{sec:4}. Finally, Section \ref{sec:5} summarizes the key findings and outlines potential directions for future research.

\section{Optimal control problem}\label{sec:2}

In this section, we introduce our optimal control within the finite horizon framework. Let us consider a controlled ordinary differential equation (ODE) of the form
\begin{equation} \label{eq:EDO_control}
\left\{
\begin{aligned}
\dot{y}(t) &= f(y(t),t,u(t)), \qquad t \in (t_0,T) \\
y(t_0) &= x ,
\end{aligned}
\right.
\end{equation}
where the dynamics $f : \mathbb{R}^d \times (t_0,T) \times U \rightarrow \mathbb{R}^d$ is bounded, continuous and Lipschitz with respect to the first variable uniformly with respect to the other two, the control $u : (t_0,T) \rightarrow U \subset \mathbb{R}^c$ is measurable and the initial condition $x \in \mathbb{R}^d$. We will denote $\mathcal{U}:= \{u : (t_0,T) \rightarrow U\ :\ u\ \mathrm{measurable}\}$ the space of admissible controls.We remark that under these assumptions we can prove the existence and uniqueness of a solution for (\ref{eq:EDO_control}).

The cost functional we consider is given by
\begin{equation}\label{eq:cost}
J(x,t,u) = \int_{t}^T g(y(s),s,u(s))e^{-\lambda (s-t)} ds + e^{-\lambda (T-t)} \psi(y(T)),
\end{equation}
where $y : (t_0,T) \rightarrow \mathbb{R}^d$ is the solution of (\ref{eq:EDO_control}), $g : \mathbb{R}^d \times (t_0,T) \times U \rightarrow \mathbb{R}$ is the running cost, which is typically assumed to be bounded, continuous and Lipschitz with respect to the first variable uniformly with respect to the other two, and $\psi: \mathbb{R}^d \rightarrow \mathbb{R}$ is the final cost which is bounded and uniformly continuous.

The value function reads
\[
v(x,t) = \inf_{u \in \mathcal{U}} J(x,t,u).
\]

The characterization of the value function through the Dynamic Program\-ming Principle (DPP) for finite horizon problems is $\forall x \in \mathbb{R}^d$, $\forall \tau \in [t,T]$
\begin{equation}\label{eq:DPP}
v(x,t) = \inf_{u \in \mathcal{U}} \left\{ \int_t^{\tau} g(y(s),s,u(s)) e^{-\lambda (s-t)} ds + e^{-\lambda(\tau-t)} v(y(\tau),\tau)\right\}.
\end{equation}

The correspondent Hamilton-Jacobi-Bellman (HJB) equation reads
\begin{equation}\label{eq:HJB}
\left\{
\begin{aligned}
-v_t(x,t) + \lambda v(x,t) + H(x,t,\nabla v) &= 0 \\
v(x,T) &= \psi(x),
\end{aligned}
\right.
\end{equation}
where $H(x,\nabla v) = \sup_{u \in U} \left\{ -f(x,t,u) \cdot \nabla v(x,t) - g(x,t,u)\right\}$.

 Existence and uniqueness of the HJB equations has been proved for a particular class of weak solutions called viscosity solutions (see \cite{CL}). Usually, it is not possible to compute an analytical solution of \eqref{eq:HJB}, therefore, in what follows, we discuss the semi-Lagrangian discretization for \eqref{eq:DPP}. We first provide the semi-discretization on time and then the fully discretization.

\paragraph{Time discretization.}
We perform a first discretization in time for both the value function and the cost functional. We fix $N_t \in \mathbb{N}$ and consider a temporal grid $\{t_0,...,t_{N_t}\}$, where $t_n = t_0 + n \Delta t$, $\Delta t = \frac{T-t_0}{N_t}$. The semidiscrete cost functional at time $t_n$ is 
\[
J_{\Delta t}^{t_n}(x,\uline{u}) = \Delta t \sum_{k=n}^{N_t-1}g(y_k,t_k,u_k) (1-\lambda \Delta t)^{k-n} + (1-\lambda \Delta t)^{N_t-n} \psi(y_{N_t}),
\]
where $y_n\approx y(t_n)$ is defined by the explicit Euler approximation of \eqref{eq:EDO_control}
\begin{equation} \label{eq:EDO_discrete}
\left\{
\begin{aligned}
y_{n+1} &= y_n + \Delta t f(y_n,t_n,u_n), \qquad n=0,...,N_t-1 \\
y_0 &= x,
\end{aligned}
\right.
\end{equation}
and $\uline{u} = (u_0,...,u_{N_t}) \in U^{N_t+1}$ is a sequence of elements of $U$. In our setting we will assume that $\uline{u}$ approximates a control $u \in \mathcal{U}$ at the discrete times $t_0,...,t_{N_t}$, i.e. $u_n \approx u(t_n)$ for any $ n=0,...,N_t$.

The semidiscrete approximation of the value function, $v_{\Delta t}(\cdot,t_n)\approx v(\cdot,t_n)$, for each $x \in \mathbb{R}^d$ reads
\begin{equation*}
\left\{
\begin{aligned}
v_{\Delta t}(x,t_n) &= \inf_{u \in U} \left\{ \Delta t g(x,t_n,u) + (1-\lambda \Delta t) v_{\Delta t}(x+\Delta t f(x,t_n,u),t_{n+1}) \right\}& \\
&\phantom{\inf_{u \in U} \{ \Delta t g(x,u) + (1-\lambda \Delta t) v_{\Delta t}} \qquad \qquad \qquad n=N_t-1,...,0 \\
v_{\Delta t}(x,T) &= \psi(x).
\end{aligned}
\right.
\end{equation*}

This semi-discretization comes from a first order approximation of equivalently (\ref{eq:DPP}) or (\ref{eq:HJB}).

\paragraph{Space Discretization.} For the space discretization we suppose that our equation is posed on a polytope $\Omega \subset \mathbb{R}^d$ such that
\begin{equation} \label{eq:Omega_hp}
x + \Delta t f(x,t,u) \in \overline{\Omega} \qquad \forall x \in \overline{\Omega}, \  \forall t \in (t_0,T),\ \forall u \in U,
\end{equation}
that we discretize with a mesh of polyhedra $\{S_i\}_i$ whose nodes will be denoted by $\{x_i\}_{i=1,...,N_x}$. Its  diameter\footnote{Here the diameter is the maximum over all the elements of the mesh of the maximum distance between two points of the same element.} will be denoted by $\Delta x$.

The discrete approximation of the value function, $V(x_i,t_n)\approx v(x_i,t_n)$, is defined for each node $x_i$ of the mesh by
\begin{equation} \label{eq:SL}
\left\{
\begin{aligned}
V(x_i,t_n) &= \inf_{u \in U} \left\{ \Delta t g(x_i,t_n,u) + (1-\lambda \Delta t) \textbf{I}_1[V](x_i+\Delta t f(x_i,t_n,u),t_{n+1}) \right\}& \\ & \phantom{\inf_{u \in U} \{ \Delta t g(x_i,u) + (1-\lambda \Delta t)} \qquad n=0,...,N_t-1,\ i=1,...,N_x \\
V(x_i,T) &= \psi(x_i).
\end{aligned}
\right.
\end{equation}
\paragraph{First error estimate.}
Error estimates for the numerical scheme \eqref{eq:SL} have been studied in \cite{falconegiorgi}. The main result is recalled in the theorem below.

\begin{theorem}
Assuming that the functions $f$ and $g$ are continuous, and that $f$, $g$ and $\psi$ are uniformly Lipschitz with respect to the space variable and bounded. Then we have
\begin{equation}\label{err_est_fg}
\|v-V\|_{\infty} \leq \tilde{C} \left( \frac{\Delta x}{\sqrt{\Delta t}} + \sqrt{\Delta t} \right)
\end{equation}
for some positive constant $\tilde{C}\in\mathbb{R}$.
\end{theorem}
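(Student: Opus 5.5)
The plan is to split the error with the triangle inequality,
\[
\|v-V\|_\infty \le \|v-v_{\Delta t}\|_\infty + \|v_{\Delta t}-V\|_\infty ,
\]
and to estimate the two pieces separately.

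\textbf{Step 1: time discretization.} I will show $\|v-v_{\Delta t}\|_\infty = O(\sqrt{\Delta t})$. Since the data are only Lipschitz in space and merely continuous in time, the natural tool is a doubling-of-variables argument for viscosity solutions.

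First I will establish some a priori facts. The function $v$ is bounded and Lipschitz in $x$, and H\"older-$1/2$ (in fact Lipschitz) in $t$; this follows from the DPP \eqref{eq:DPP} together with Gronwall's lemma applied to \eqref{eq:EDO_control}. The function $v_{\Delta t}$ is Lipschitz in $x$ uniformly in $n$. To see this, compare two Euler trajectories \eqref{eq:EDO_discrete} driven by the same control. Their distance grows at most by the factor $(1+L_f\Delta t)^{N_t}\le e^{L_f(T-t_0)}$, and summing the costs gives a Lipschitz constant independent of $\Delta t$.

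Next I will compare the two functions through the test function
\[
\Phi(x,y,t,t_n)=v(x,t)-v_{\Delta t}(y,t_n)-\frac{|x-y|^2}{\varepsilon}-\frac{|t-t_n|^2}{\varepsilon}-\beta(|x|^2+|y|^2),
\]
evaluated at a maximum point. At that point I will use the subsolution inequality for $v$ and the scheme inequality for $v_{\Delta t}$, tested with the same control. The consistency error of the scheme applied to the quadratic penalization is of order $\Delta t/\varepsilon$. The Lipschitz bounds give $|x-y|,|t-t_n|=O(\varepsilon)$, so the penalization terms contribute $O(\varepsilon)$. Altogether $v-v_{\Delta t}\le C(\varepsilon+\Delta t/\varepsilon)$, and choosing $\varepsilon=\sqrt{\Delta t}$ gives $C\sqrt{\Delta t}$. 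The reverse inequality is symmetric. The terminal layer $t=T$, $t_n=T$ is handled directly by the Lipschitz continuity of $\psi$.

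\textbf{Step 2: space discretization.} Both $v_{\Delta t}$ and $V$ are fixed points of backward recursions of the same form. Let
\[
e_n=\max_i|v_{\Delta t}(x_i,t_n)-V(x_i,t_n)|.
\]
Subtracting the two recursions and using that $\inf$ is $1$-Lipschitz, I will obtain
\[
e_n\le(1-\lambda\Delta t)\bigl(e_{n+1}+\|v_{\Delta t}(\cdot,t_{n+1})-\mathbf I_1[v_{\Delta t}](\cdot,t_{n+1})\|_\infty\bigr).
\]
Here I use that $\mathbf I_1$ is monotone and nonexpansive (convex weights), and that by \eqref{eq:Omega_hp} the feet of the characteristics remain in $\overline\Omega$. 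The interpolation error of a Lipschitz function is $L\Delta x$, so
\[
e_n\le e_{n+1}+L\Delta x,\qquad e_{N_t}=0 .
\]
Iterating gives $e_0\le N_tL\Delta x=C\,\Delta x/\Delta t$. This is worse than the target, so I will refine it by the standard trick of exploiting the discrete viscosity structure. I will repeat the doubling argument of Step 1 directly between $v$ and $V$ (extended by $\mathbf I_1$). The interpolation error now enters the inequality at the maximum point as $\Delta x\,|x-y|/(\varepsilon\,\Delta t)$-type terms, which are bounded by $\Delta x/\sqrt{\Delta t}$ once $\varepsilon=\sqrt{\Delta t}$.

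\textbf{Main obstacle.} The hardest step is this refined space estimate. One must avoid the crude per-step accumulation and instead absorb the $\Delta x$ error into the penalized comparison. This requires checking that the interpolation of the quadratic penalization costs only $\Delta x^2/\varepsilon$ per step. Summed over $1/\Delta t$ steps, this gives $\Delta x^2/(\varepsilon\Delta t)$. Combined with the Lipschitz term $\Delta x$ and optimized in $\varepsilon$, it yields the announced bound $\Delta x/\sqrt{\Delta t}+\sqrt{\Delta t}$, as in \cite{falconegiorgi}.
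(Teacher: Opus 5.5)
The paper does not prove this statement. It only quotes it from \cite{falconegiorgi}, so your sketch has to stand on its own. Its final route is the standard one: doubling of variables directly between $v$ and $V$, a consistency error $\Delta t/\varepsilon+\Delta x^2/(\varepsilon\Delta t)$, and optimization in $\varepsilon$. The split through $v_{\Delta t}$ is a detour you end up not using.

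\textbf{The time dependence of the data is a genuine gap.} At the maximum point you must compare $g(\bar x,\bar t,u)$ with $g(\bar y,t_n,u)$, and $f(\bar x,\bar t,u)\cdot p$ with $f(\bar y,t_n,u)\cdot p$, where $|p|\le C$. Spatial Lipschitz continuity controls only the $|\bar x-\bar y|$ part. With $f,g$ merely continuous in $t$, the $|\bar t-t_n|=O(\varepsilon)$ part is only $\omega(C\varepsilon)$ for some modulus $\omega$, and your bound $C(\varepsilon+\Delta t/\varepsilon)$ silently drops it.

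This cannot be patched, because under the hypotheses as literally stated the estimate is false. Take $t_0=0$, $T=1$, $\lambda=0$, $f\equiv0$, $\psi\equiv0$, $\Delta t=2^{-m}$, and
$g(x,t,u)=h(t):=\sum_{j\ge1}2^{-j\alpha}\cos(2^j\pi t)$ with $0<\alpha<\frac12$.
\begin{itemize}
\item The exact value is $v(x,0)=\int_0^1h=0$.
\item Scheme \eqref{eq:SL} reduces to the Riemann sum $V(x_i,0)=\Delta t\sum_{k=0}^{N_t-1}h(k\Delta t)$.
\item On the grid, each mode with $j\le m$ sums to zero, and each mode with $j>m$ equals $1$ at every grid point.
\item Hence $V(x_i,0)=\sum_{j>m}2^{-j\alpha}=\frac{2^{-\alpha}}{1-2^{-\alpha}}\Delta t^{\alpha}$, which is not $O(\sqrt{\Delta t})$ when $\Delta x\le\Delta t$.
\end{itemize}
The high-frequency part survives in the same way for $\lambda>0$. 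What you need is Lipschitz dependence on $t$, as in \ref{hp:1}--\ref{hp:2}, or autonomous data. Then those terms are $O(|\bar x-\bar y|+|\bar t-t_n|)=O(\varepsilon)$ and your argument closes.

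\textbf{Three further corrections.}
\begin{itemize}
\item The sentence in Step 2 is wrong. A term $\Delta x\,|\bar x-\bar y|/(\varepsilon\Delta t)$ is $O(\Delta x/\Delta t)$, since $|\bar x-\bar y|=O(\varepsilon)$, so it gains nothing over the crude recursion. The gain comes, as in your last paragraph, from $\mathbf{I}_1$ being monotone and exact on affine functions. To use this, take the maximum over nodes $y=x_j$, so that the scheme is available at $\bar y$. Then push the inequalities for $V(x_j,t_{n+1})$ through the interpolation weights, so that only the second-order remainder $C\Delta x^2/\varepsilon$ of the penalization survives.
\item The choice $\varepsilon=\sqrt{\Delta t}$ is wrong there. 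The bound $\Delta x^2/\Delta t^{3/2}\le\Delta x/\sqrt{\Delta t}$ holds only when $\Delta x\le\Delta t$, so you need $\varepsilon=(\Delta t+\Delta x^2/\Delta t)^{1/2}$.
\item If $\lambda=0$, which the statement allows, the maximum-point inequality $\lambda(v-V)\le\dots$ gives nothing. You need an extra penalization $-\sigma(T-t)$ in the doubling functional, with $\sigma$ larger than the consistency terms, to force the maximum into the terminal layer.
\end{itemize}
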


This estimate features the term $\frac{1}{\Delta t}$ which is not observed in numerical experiments. Indeed, the goal of this work is to improve the estimate \eqref{err_est_fg}.

\section{Error estimates}\label{sec:3}

This section provides a new error estimate for the numerical value function $V$ approximated using \eqref{eq:SL}. We extend the results provided in \cite{DN} for the infinite horizon case to our setting described in Section \ref{sec:2}. Our goal is to improve \eqref{err_est_fg} with a bound depending linearly only on $\Delta t$ and $\Delta x$.

We will assume the following hypotheses:
\begin{enumerate}[label={(H\textsubscript{\arabic*})}]
\item \label{hp:1} $f$ is continuous, Lipschitz with respect to the first variable, uniformly with respect to the second and the third variable, with corresponding Lipschitz constant $L_f$:
\[
\|f(x,t,u) - f(y,t,u)\|_{\infty} \leq L_f \|x-y\|_{\infty} \quad \forall x,y \in \mathbb{R}^d,\ \forall t \in (t_0,T),\ \forall u \in \mathcal{U},
\]
Lipschitz with respect to the second variable, uniformly with respect to the first and third variable, with Lipschitz constant $L_f$:
\[
\|f(x,t,u) - f(x,s,u)\|_{\infty} \leq L_f \lvert t-s\rvert \quad \forall x \in \mathbb{R}^d,\ \forall t,s \in (t_0,T),\ \forall u \in \mathcal{U},
\]
Lipschitz with respect to the third variable, uniformly with respect to the first and second variable, with Lipschitz constant $L_f$:
\[
\|f(x,t,u) - f(x,t,w)\|_{\infty} \leq L_f \|u-w\|_{\infty} \quad \forall x \in \mathbb{R}^d,\ \forall t \in (t_0,T),\ \forall u,w \in \mathcal{U},
\]
and bounded by $M_f$:
\[
\|f(x,t,u)\|_{\infty} \leq M_f \qquad \forall x \in \mathbb{R}^d,\ \forall t \in (t_0,T),\ \forall u \in \mathcal{U};
\]
\item \label{hp:2} $g$ is continuous, Lipschitz with respect to the first variable, uniformly with respect to the first and second with Lipschitz constant $L_g$:
\[
\lvert g(x,t,u) - g(y,t,u)\rvert \leq L_g \|x-y\|_{\infty} \quad \forall x,y \in \mathbb{R}^d,\ \forall t \in (t_0,T),\ \forall u \in \mathcal{U},
\]
Lipschitz with respect to the second variable, uniformly with respect to the first and third variable, with Lipschitz constant $L_f$:
\[
\lvert g(x,t,u) - g(x,s,u)\rvert \leq L_g \lvert t-s \rvert \quad \forall x \in \mathbb{R}^d,\ \forall t,s \in (t_0,T),\ \forall u \in \mathcal{U},
\]
Lipschitz with respect to the third variable, uniformly with respect to the first and second variable, with Lipschitz constant $L_g$:
\[
\lvert g(x,t,u) - g(x,t,w)\rvert \leq L_g \|u-w\|_{\infty} \quad \forall x \in \mathbb{R}^d,\ \forall t \in (t_0,T),\ \forall u,w \in \mathcal{U},
\]
and bounded by $M_g$:
\[
\lvert g(x,t,u)\rvert \leq M_g \qquad \forall x \in \mathbb{R}^d,\ \forall t \in (t_0,T),\ \forall u \in \mathcal{U}.
\]
\item \label{hp:3} $\psi$ is Lipschitz:
\[
\lvert \psi(x) - \psi(y) \rvert \leq L_{\psi} \|x-y\|_{\infty} \qquad \forall x,y \in \mathbb{R}^d
\]
and bounded:
\[
\lvert \psi(x) \rvert \leq M_{\psi} \qquad \forall x \in \mathbb{R}^d.
\]
\end{enumerate}

\begin{observation}
Unlike \cite{falconegiorgi}, we further assume the uniform Lipschitz continuity for the functions $f$ and $g$ with respect to the controls (as also done in \cite{DN}).
\end{observation}

In what follows we will use a fully-discrete version of the cost functional, which is defined for any $n=0,...,N_t$ as
\begin{equation} \label{eq:cost_discrete}
J^{t_n}_{\Delta t,\Delta x}(x,\uline{u}) = \Delta t \sum_{k=n}^{N_t-1} \textbf{I}_1[g](y_k,t_k,u_k) (1-\lambda \Delta t)^{k-n} + (1-\lambda \Delta t)^{N_t-n} \psi(y_{N_t}),
\end{equation}
where $\textbf{I}_1[\cdot]$ is the $d$-linear interpolation operator defined on the mesh.

We will denote by
\[
M_u := \max_{n=0,...,N_t-1} \max_{s \in [t_n,t_{n+1})} \|u(s)-u(t_n)\|_{\infty}
\]
the maximum distance between two values of the same control over one time interval of our grid. We note that $M_u$ can be bounded with the time step provided that the controls are regular (e.g. Lipschitz continuity).

The following result establishes a further relation between $v$ and its discrete counterpart $V$, since here we prove that $V(x,t_n)$ can be seen as the infimum over all $\uline{u} \in U^{N_t+1}$ of the discrete functional $J_{\Delta t,\Delta x}^{t_n}(x,\uline{u})$ for all $x \in \overline \Omega$ and $n=0,...,N_t-1$, in analogy with the definition of $v$ as the infimum over all the controls of the cost functional.

\begin{observation}
We note that in this discrete setting the infimum is pointwise, indeed it depends on the initial condition $x \in \overline \Omega$ where we compute the value function, i.e.
\[
\uline{u}^*:= \underset{\uline{u} \in U^{N_t-n+1}}{\mathrm{arginf}} J_{\Delta t,\Delta x}(x,\uline{u}) = \uline{u}^*(x).
\]
\end{observation}

\begin{theorem} \label{th:1}
Assume \ref{hp:3} holds true. The function $V(\cdot,t_n) \in \mathcal{W}$,
\[
\mathcal{W}:= \{ w : \Omega \rightarrow \mathbb{R}\mid w \in C(\Omega),\ \nabla w(x) = c_i \in \mathbb{R}^d,\ \forall x \in S_i,\ \forall i=1,...,N_x\},
\]
solution to (\ref{eq:SL}) for all $ n=0,...,N_t$, satisfies the following equality
\[
V(x,t_n) = \inf_{\uline{u} \in U^{N_t-n+1}} J^{t_n}_{\Delta t, \Delta x}(x,\uline{u}) \qquad \forall x \in \overline{\Omega},\ \forall n=0,...,N_t,
\]
where $\uline{u} = (u_n,...,u_{N_t})$.
\end{theorem}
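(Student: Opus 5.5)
The proof goes by backward induction on $n$, from $n=N_t$ down to $n=0$. It rests on a one-step recursion for the fully discrete functional \eqref{eq:cost_discrete}, which plays the role of a discrete DPP. Throughout I assume $\Delta t$ is small enough that $1-\lambda\Delta t\geq 0$, so that multiplying by this factor preserves infima. Assumption \ref{hp:3} ensures that $\psi$ is bounded, so all the infima below are finite. Assumption \eqref{eq:Omega_hp} ensures that every Euler step $y_{k+1}=y_k+\Delta t f(y_k,t_k,u_k)$ started in $\overline\Omega$ stays in $\overline\Omega$, so $\textbf{I}_1[g]$ and $V$ are always evaluated where they are defined.

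\emph{Step 1: recursion.} Write $\uline{u}=(u_n,\uline{u}')$ with $\uline{u}'=(u_{n+1},\dots,u_{N_t})$. Splitting off the $k=n$ term in \eqref{eq:cost_discrete} and reindexing the sum should give
\[
J^{t_n}_{\Delta t,\Delta x}(x,\uline{u})=\Delta t\,\textbf{I}_1[g](x,t_n,u_n)+(1-\lambda\Delta t)\,J^{t_{n+1}}_{\Delta t,\Delta x}\bigl(x+\Delta t f(x,t_n,u_n),\uline{u}'\bigr).
\]
This holds because the trajectory from $x$ driven by $\uline{u}$ coincides, from index $n+1$ on, with the trajectory started at $y_{n+1}=x+\Delta t f(x,t_n,u_n)$ driven by $\uline{u}'$. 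Every power $(1-\lambda\Delta t)^{k-n}$ then factors as $(1-\lambda\Delta t)\cdot(1-\lambda\Delta t)^{k-(n+1)}$.

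\emph{Step 2: base case.} For $n=N_t$ the sum in \eqref{eq:cost_discrete} is empty, so $J^{T}_{\Delta t,\Delta x}(x,\uline{u})=\psi(x)$ regardless of the control, and this must be matched with $V(x,T)$.

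\emph{Step 3: induction step.} Assume $V(z,t_{n+1})=\inf_{\uline{u}'}J^{t_{n+1}}_{\Delta t,\Delta x}(z,\uline{u}')$ for all $z\in\overline\Omega$. Taking the infimum in Step 1, first over $\uline{u}'$ with $u_n$ fixed and then over $u_n$, gives
\[
\inf_{\uline{u}}J^{t_n}_{\Delta t,\Delta x}(x,\uline{u})=\inf_{u\in U}\Bigl\{\Delta t\,\textbf{I}_1[g](x,t_n,u)+(1-\lambda\Delta t)\,V\bigl(x+\Delta t f(x,t_n,u),t_{n+1}\bigr)\Bigr\}.
\]
The nested infimum equals the joint one because the variables separate: the first term depends only on $u_n$, and the tail infimum, taken with $u_n$ fixed, depends on $u_n$ only through the starting point $x+\Delta t f(x,t_n,u_n)$. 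At a node $x=x_i$ we have $\textbf{I}_1[g](x_i,\cdot)=g(x_i,\cdot)$ and $V(\cdot,t_{n+1})=\textbf{I}_1[V](\cdot,t_{n+1})$ as an element of $\mathcal W$, so the right-hand side is exactly \eqref{eq:SL}.

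\emph{Main obstacle.} The delicate point is passing from nodes to arbitrary $x\in\overline\Omega$, in two places.
\begin{itemize}
\item In Step 3, the right-hand side for non-nodal $x$ is an infimum of a function of $x$. The piecewise linear interpolant of the nodal infima need not equal the infimum of the interpolants.
\item In Step 2, $V(\cdot,T)\in\mathcal W$ is really $\textbf{I}_1[\psi]$, not $\psi$.
\end{itemize}
My first attempt would be to read the statement with $V(\cdot,t_n)$ understood as the function defined for every $x\in\overline\Omega$ by the right-hand side of \eqref{eq:SL}, with $g$ replaced by $\textbf{I}_1[g]$. In the base case $\psi$ would likewise be understood as its interpolant, i.e.\ assume $\psi\in\mathcal W$ or replace $\psi$ by $\textbf{I}_1[\psi]$ in \eqref{eq:cost_discrete}. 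This function agrees with the solution of \eqref{eq:SL} at the nodes, and the induction above then closes verbatim. I would check carefully whether membership in $\mathcal W$ survives the infimum; I expect it does not in general. If it fails, the theorem should be stated as an equality at the nodes $x_i$, with $\textbf{I}_1[V]$ used between nodes, and that nodal version is precisely what Steps 1--3 prove.
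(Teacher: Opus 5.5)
Your Steps 1--3 are the paper's argument. The paper also splits off the $k=n$ term at a node and proves the two inequalities; its $\varepsilon$-argument is your nested-equals-joint infimum. It then reads the resulting relation as \eqref{eq:SL}. The node/off-node issue you flag is exactly where the paper's proof is weak. It is bridged by \eqref{eqW}, i.e.\ $J^{t_n}_{\Delta t,\Delta x}(\cdot,\uline{u})\in\mathcal{W}$ and hence $w^{t_n}\in\mathcal{W}$. That claim is what silently lets $w^{t_{n+1}}$ at the foot $x_i+\Delta t f(x_i,t_n,u)$ be replaced by $\textbf{I}_1[w^{t_{n+1}}]$. Your suspicion is justified, for three reasons:
\begin{itemize}
\item $x\mapsto\textbf{I}_1[g](y_k(x),t_k,u_k)$ composes a cellwise affine function with the (generally nonlinear) $k$-step Euler map, so its kinks do not sit on the mesh; even for $f=u$ they are merely translated.
\item $\psi(y_{N_t}(x))$ is not piecewise affine at all.
\item An infimum of cellwise affine functions is not cellwise affine.
\end{itemize}
Indeed, for $n=N_t$ the statement asserts $V(\cdot,T)=\psi$ on $\overline{\Omega}$. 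This contradicts $V(\cdot,T)\in\mathcal{W}$ unless $\psi$ is piecewise linear on the mesh.

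The genuine gap in your proposal is the repair. Steps 1--3 do \emph{not} prove the nodal identity $V(x_i,t_n)=\inf J^{t_n}_{\Delta t,\Delta x}(x_i,\cdot)$. At a node, Step 1 moves the trajectory to $z=x_i+\Delta t f(x_i,t_n,u_n)$, which is generically not a node. So Step 3 at $x_i$ needs the induction hypothesis at $z$ in the form $\textbf{I}_1[V](z,t_{n+1})=\inf J^{t_{n+1}}_{\Delta t,\Delta x}(z,\cdot)$. That is precisely the off-grid identity you concede fails. A nodal induction cannot close, because the Euler trajectories in \eqref{eq:cost_discrete} leave the grid after one step, whereas \eqref{eq:SL} re-interpolates at every step. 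The nodal identity is in fact false. For $n=N_t-1$,
\[
\inf J^{t_{N_t-1}}_{\Delta t,\Delta x}(x_i,\cdot)=\inf_{u\in U}\bigl\{\Delta t\,g(x_i,t_{N_t-1},u)+(1-\lambda\Delta t)\,\psi\bigl(x_i+\Delta t f(x_i,t_{N_t-1},u)\bigr)\bigr\},
\]
while $V(x_i,t_{N_t-1})$ is the same expression with $\psi$ replaced by $\textbf{I}_1[\psi]$. Take the data of Test~1 (only their behaviour near $x_i$ matters) and $x_i=\Delta x$. The first infimum is attained only at the control whose foot is $\Delta x/(1+\Delta t-\Delta t^2)$. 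This foot lies strictly inside the cell $(0,\Delta x)$, where $\textbf{I}_1[\psi]>\psi$, so $V(x_i,t_{N_t-1})$ is strictly larger.

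Your first reading breaks at the same place. Suppose you extend $V$ off the grid by the right-hand side of \eqref{eq:SL} with $\textbf{I}_1[V](\cdot,t_{n+1})$ inside. This matches \eqref{eq:SL} at the nodes, but the induction hypothesis supplies the extension at level $n+1$, not its interpolant. If instead you use the extension itself inside, the induction closes, but agreement with \eqref{eq:SL} at the nodes is lost.

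What Steps 1--3 genuinely prove, for every $x\in\overline{\Omega}$, is narrower. The quantity $\inf J^{t_n}_{\Delta t,\Delta x}(x,\cdot)$ equals the purely time-discrete scheme $v_{\Delta t}$ with $g$ replaced by $\textbf{I}_1[g]$ and no interpolation of the value function. Passing from that to $V$ requires controlling the interpolation error injected at each of the $N_t-n$ steps, and no rearrangement of the induction provides this. An exact representation of $V$ would need a functional whose state is re-projected onto the nodes at every step, for instance the controlled Markov chain with the weights of $\textbf{I}_1$ as transition probabilities. The deterministic Euler trajectory does not do this.
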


\begin{proof}
Let us first note that $\textbf{I}_1[g]$ inherits the same boundedness properties as $g$ due to its linearity.
Thus, we have
\[
(1-\lambda \Delta t)^n \textbf{I}_1[g](y_n,t_n,u_n) \leq (1-\lambda \Delta t)^n M_g, \qquad \forall n=0,...,N_t-1,
\]
which proves the boundedness of $J^{t_n}_{\Delta t,\Delta x}$ for all $n=0,...,N_t-1$. Furthermore, for all $n=0,...,N_t$ and for any $\uline{u} \in U^{N_t-n+1}$,
\begin{equation}\label{eqW}
J^{t_n}_{\Delta t, \Delta x} \in \mathcal{W},
\end{equation}
since, by the definition of $\textbf{I}_1$, we have
\[
\textbf{I}_1[g](y_n,t_n,u_n) \in \mathcal{W} \qquad \forall n=0,...,N_t.
\]

Let us define for each $n=0,...,N_t$ the auxiliary function
\[
w^{t_n}(x) := \inf_{\uline{u} \in U^{N_t-n+1}} J^{t_n}_{\Delta t,\Delta x}(x,\uline{u}).
\]
From \eqref{eqW}, it follows that $w^{t_n} \in \mathcal{W}$, for all $n=0,...,N_t$. We want to prove $w^{t_n}(x) = V(x,t_n)$ for all $x \in \overline{\Omega}$ and for all $n=0,...,N_t$.

Let us fix $i \in \{1,...,N_x\}, n \in \{0,...,N_t-1\},\ \uline{u} = (u_n,...,u_{N_t}) \in U^{N_t-n+1}$ and set $\overline{u} := (u_{n+1},..., u_{N_t}) \in U^{N_t-n}$ the sequence of all the elements of $\uline{u}$ except $u_n$. Then, since $\textbf{I}_1[g]$ coincides with $g$ on the nodes of the space mesh,
\begin{align*}
J^{t_n}_{\Delta t,\Delta x}&(x_i,\uline{u}) = \Delta t \textbf{I}_1[g](x_i,t_n,u_n) +\Delta t \sum_{k=n+1}^{N_t-1} (1-\lambda \Delta t)^k \textbf{I}_1[g](y_k,t_k,u_k)^{k-n} + \\
&+ (1-\lambda \Delta t)^{N_t-n} \psi(y_{N_t}) = \\
&= \Delta t g(x_i,t_n,u_n) + \Delta t (1-\lambda \Delta t) \sum_{k=n+1}^{N_t-1} (1-\lambda \Delta t)^{k-(n+1)} \textbf{I}_1[g](y_{k},t_{k},u_{k}) + \\
&+ (1-\lambda \Delta t)^{N_t-n} \psi(y_{N_t}) =\\
&= \Delta t g(x_i,t_n,u_n) + (1-\lambda \Delta t) J^{t_{n+1}}_{\Delta t,\Delta x} (x_i + \Delta t f(x_i,t_n,u_n),\overline{u}).
\end{align*}
By the definition of $w^{t_n}$
\[
J_{\Delta t,\Delta x}^{t_n}(x_i,\uline{u}) \geq \Delta t g(x_i,t_n,u_n) + (1-\lambda \Delta t) w^{t_{n+1}}(x_i + \Delta t f(x_i,t_n,u_n)),
\]
from which
\begin{equation} \label{eq:proof1}
\begin{aligned}
w^{t_n}(x_i) &= \inf_{\uline{u} \in U^{N_t-n+1}} J^{t_n}_{\Delta t,\Delta x}(x_i,\uline{u})\geq \\
&\geq \inf_{\uline{u} \in U^{N-t-n+1}} \left\{ \Delta t g(x_i,t_n,u_n) + (1-\lambda \Delta t) w^{t_{n+1}}(x_i + \Delta t f(x_i,t_n,u_n)) \right\} = \\
&= \inf_{u \in U} \left\{ \Delta t g(x_i,t_n,u) + (1-\lambda \Delta t) w^{t_{n+1}}(x_i + \Delta t f(x_i,t_n,u)) \right\},
\end{aligned}
\end{equation}
where the last equality is reached since  we are considering only one element of $\uline{u}$.
For $\varepsilon>0$, let us consider $u^{\varepsilon}_n \in U$, and let $z:=x_i+\Delta t f(x_i,t_n,u_n^{\varepsilon})$. From the definition of $w^{t_n}$, there exists $\uline{u}^{\varepsilon} = (u_{n+1}^{\varepsilon},...,u_{N_t}^{\varepsilon}) \in U^{N_t-n}$ such that
\[
w^{t_n}+\varepsilon \geq J^{t_n}_{\Delta t,\Delta x}(z,\uline{u}^{\varepsilon}).
\]
Let $\tilde{u}:=(u_n^{\varepsilon},u_{n+1}^{\varepsilon},...,u_{N_t}^{\varepsilon}) \in U^{N_t+1}$. Repeating the computations above, we get
\begin{align*}
J^{t_n}_{\Delta t,\Delta t}(x_i,\tilde{u}) &= \Delta t g(x_i,t_n,u_n^{\varepsilon}) + (1-\lambda \Delta t) J^{t_{n+1}}_{\Delta t,\Delta x}(z,\uline{u}^{\varepsilon}) \leq \\
& \leq \Delta t g(x_i,t_n,u_n^{\varepsilon}) + (1-\lambda \Delta t) (w^{t_{n+1}}(z)+\varepsilon).
\end{align*}
Then, due to the arbitrariness of $u_n^{\varepsilon}$, we have
\[
w^{t_n}(x_i) \leq \inf_{u \in U} \left\{ \Delta t g(x_i,t_n,u) + (1-\lambda \Delta t) w^{t_{n+1}}(z) \right\} + (1-\lambda \Delta t) \varepsilon,
\]
and for the arbitrariness of $\varepsilon$,
\begin{equation} \label{eq:proof2}
w^{t_n}(x_i) \leq \inf_{u \in U} \left\{ \Delta t g(x_i,t_n,u) + (1-\lambda \Delta t) w^{t_{n+1}}(z) \right\}.
\end{equation}
By putting (\ref{eq:proof1}) and (\ref{eq:proof2}) together, we get
\[
w^{t_n}(x_i) = \inf_{u \in U} \left\{ \Delta t g(x_i,t_n,u) + (1-\lambda \Delta t) w^{t_{n+1}}(z) \right\}.
\]
This relation defines a solution of (\ref{eq:SL}) at the time $t_n$.

Defining $w^{t_{N_t}}(x_i):=\psi(x_i)$ for all $i=1,...,N_x$, we get that $(w^{t_0},...,w^{t_{N_t}}) \in \mathcal{W}^{N_t+1}$ is a solution of (\ref{eq:SL}), and so we can conclude
\[
w^{t_n}(x) = V(x,t_n) \qquad \forall x \in \overline{\Omega},\ \forall n=0,...,N_t.
\]
\end{proof}

\begin{lemma} \label{th:lemma1}
Assume \ref{hp:1} and \ref{hp:2} hold true. Let $y(t)$ be the solution of (\ref{eq:EDO_control}) and $\overline{y}(t)$ the piecewise constant extension of $(y_0,..., y_{N_t})$ the sequence defined in (\ref{eq:EDO_discrete}). Then, for each $t \in (t_0,T)$ the following inequality holds:
\begin{align*}
\|y(t) - \overline{y}(t)\|_{\infty} &\leq e^{L_I (t-t_0)} \left( (t-t_0) L_f (M_u + \Delta t + \Delta x) + 2 M_f \Delta t\right) \leq \\
&\leq e^{L_I t} \left( t L_f (M_u + \Delta t + \Delta x) + 2 M_f \Delta t\right),
\end{align*}
where $L_I:= d C L_f$ and $C\in\mathbb{R}$ is a positive constant. 
\end{lemma}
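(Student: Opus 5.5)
We compare the exact trajectory with the Euler iterates interval by interval and then close the estimate with Gronwall's lemma. Fix $t\in(t_0,T)$ and let $n$ be the index with $t\in[t_n,t_{n+1})$, so that $\overline{y}(t)=y_n$. We split
\[
\|y(t)-\overline{y}(t)\|_\infty \le \|y(t)-y(t_n)\|_\infty + \|y(t_n)-y_n\|_\infty .
\]
The first term is at most $M_f\Delta t$ because $f$ is bounded by $M_f$. For the second term we write both quantities in integral form:
\[
y(t_n)=x+\int_{t_0}^{t_n} f(y(s),s,u(s))\,ds, \qquad y_n=x+\sum_{k<n}\int_{t_k}^{t_{k+1}} f(y_k,t_k,u_k)\,ds .
\]
We take $u_k=u(t_k)$. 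If the discrete controls are only approximations of $u(t_k)$, the extra mismatch is absorbed into $M_u$. The difference then becomes $\int_{t_0}^{t_n}\bigl(f(y(s),s,u(s))-f(\overline{y}(s),\overline{s},\overline{u}(s))\bigr)ds$, where $\overline{s}$ and $\overline{u}$ denote the piecewise constant (left-endpoint) extensions of time and control.

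On each subinterval we use the three Lipschitz properties in \ref{hp:1}:
\[
\|f(y(s),s,u(s))-f(\overline{y}(s),\overline{s},\overline{u}(s))\|_\infty \le L_f\|y(s)-\overline{y}(s)\|_\infty + L_f\Delta t + L_f M_u .
\]
Integrating over $[t_0,t_n]$ gives the term $(t-t_0)L_f(M_u+\Delta t)$.

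The $\Delta x$ contribution and the constant $L_I=dCL_f$ suggest that the authors actually compare with the fully discrete (interpolated) dynamics. Then $f$ evaluated at $y_k$ is replaced by its $d$-linear interpolant $\textbf{I}_1[f]$, with two consequences:
\begin{itemize}
\item The interpolation error $\|f-\textbf{I}_1[f]\|_\infty\le L_f\Delta x$ produces the $L_f\Delta x$ term.
\item The Lipschitz constant of $\textbf{I}_1[f]$ in space is bounded by $dCL_f$, where $C$ comes from the mesh regularity: barycentric and multilinear weights have gradients of order $1/\Delta x$, but they multiply nodal differences of order $L_f\Delta x$.
\end{itemize}
So the key intermediate inequality is
\[
e(t):=\|y(t)-\overline{y}(t)\|_\infty \le 2M_f\Delta t + (t-t_0)L_f(M_u+\Delta t+\Delta x) + L_I\int_{t_0}^{t} e(s)\,ds .
\]
Here the second $M_f\Delta t$ accounts for the mismatch between $\overline{y}(s)$ and the continuous interpolation on the last partial interval.

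Since the forcing term is nondecreasing in $t$, the integral form of Gronwall's lemma then gives
\[
e(t)\le e^{L_I(t-t_0)}\bigl((t-t_0)L_f(M_u+\Delta t+\Delta x)+2M_f\Delta t\bigr).
\]
The second inequality in the statement is trivial, since $t-t_0\le t$ when $t_0\ge 0$.

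The main obstacle is the step where the $\Delta x$ term and $L_I=dCL_f$ enter. We must justify that the spatial interpolation of the dynamics, or of the evaluation point, changes the Lipschitz constant only by a factor $dC$ and costs only $O(\Delta x)$ per unit time. This needs a uniform bound on the gradients of the $d$-linear basis functions times local oscillations, that is, mesh regularity. We will state this as a standard interpolation estimate rather than derive it. If that fails, the fallback is to prove the bound with $L_I=L_f$ and no $\Delta x$ term, which is still consistent with (though weaker than necessary for) the displayed inequality.
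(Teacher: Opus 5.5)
Your proposal is correct and follows the paper's proof. It uses the same integral representation of $\overline{y}$ with $\textbf{I}_1[f]$ inserted, the same three-way split of the integrand into a time/control term $L_f(M_u+\Delta t)$, an interpolation error $L_f\Delta x$ and a Lipschitz term with constant $dCL_f$ for the interpolant (which the paper also asserts without proof), and the same Gronwall closure. Your fallback is also sound and in fact stronger, not weaker: since \eqref{eq:EDO_discrete} uses $f$ itself, the direct argument gives $e^{L_f(t-t_0)}\bigl(M_f\Delta t+(t-t_0)L_f(M_u+\Delta t)\bigr)$, which implies the stated bound whenever the free constant satisfies $dC\ge 1$.
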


\begin{proof}

Let us define for each $t \in (t_0,T)$
\[
\overline{u}(t):= \sum_{n=0}^{N_t-1} u_n \chi_{[t_n,t_{n+1})}(t) \qquad \mathrm{and} \qquad
\overline{t}(t):= \sum_{n=0}^{N_t-1} t_n \chi_{[t_n,t_{n+1})}(t)
\]
the piecewise constant extensions respectively of $(u_0,...,u_{N_t})$ and $(t_0,...,t_{N_t})$ on the interval $[t_0,T]$. Then, since $f$ is constant on the subintervals of our time grid, we have
\[
\overline{y}(t) = x + \int_{t_0}^{\left\lfloor\frac{t}{\Delta t}\right\rfloor \Delta t} \textbf{I}_1[f](\overline{y}(s),\overline{t}(s),\overline{u}(s)) ds,
\]
where $\lfloor\cdot\rfloor$ denotes the floor function, and we can estimate the difference between $y$ and $\overline{y}$ as
\begin{align*}
y(t) - \overline{y}(t) &= \int_{t_0}^{\left\lfloor\frac{t}{\Delta t}\right\rfloor \Delta t} \left( f(y(s),s,u(s)) - \textbf{I}_1[f](\overline{y}(s),\overline{t}(s),\overline{u}(s)) \right) ds + \\
&+ \int_{\left\lfloor\frac{t}{\Delta t}\right\rfloor \Delta t}^t f(y(s),s,u(s)) ds .
\end{align*}
Taking the $\|\cdot\|_\infty$ norm and using the boundedness of $f$ yields
\[
\|y(t) - \overline{y}(t)\|_{\infty} \le \int_{t_0}^{\left\lfloor\frac{t}{\Delta t}\right\rfloor \Delta t} \| f(y(s),s,u(s)) - \textbf{I}_1[f](\overline{y}(s),\overline{t}(s),\overline{u}(s)) \|_{\infty} ds + 2 M_f \Delta t .
\]

\medskip
\noindent
\textbf{Step 1. Decomposition of the integrand.}
For $s\in[t_0,T]$ we decompose
\begin{align}\label{stimaf}
\begin{aligned}
\| f(y(s),s,u(s)) &- \textbf{I}_1[f](\overline{y}(s),s,\overline{u}(s)) \|_{\infty} \leq \|f(y(s),s,u(s)) - f(y(s),\overline{t}(s),\overline{u}(s))\|_{\infty} + \\
&+ \|f(y(s),\overline{t}(s),\overline{u}(s)) - \textbf{I}_1[f](y(s),\overline{t}(s),\overline{u}(s))\|_{\infty} + \\
&+ \|\textbf{I}_1[f](y(s),\overline{t}(s),\overline{u}(s)) - \textbf{I}_1[f](\overline{y}(s),s,\overline{u}(s))\|_{\infty} .
\end{aligned}
\end{align}
The first term accounts for time and control discretization and, by the
Lipschitz continuity of $f$, satisfies 
\begin{align*}
\|f(y(s),s,u(s)) - f(y(s),\overline{t}(s),\overline{u}(s))\|_{\infty} &\leq \|f(y(s),s,u(s)) - f(y(s),s,\overline{u}(s))\|_{\infty} + \\
&+ \|f(y(s),s,\overline{u}(s)) - f(y(s),\overline{t}(s),\overline{u}(s)\|_{\infty} \leq \\
&\leq L_f \left( \|u(s) - \overline{u}(s)\|_{\infty} + \lvert s-\overline{t}(s) \rvert \right) = \\
&= L_f \left( \|u(s) - u(t_p)\|_{\infty} + \lvert s-\overline{t}(s) \rvert \right) \leq \\
&\leq L_f (M_u + \Delta t),
\end{align*}
for some $p \in \{0,...,N_t\}$.

The second term in the right part of \eqref{stimaf} corresponds to the interpolation error (see e.g. \cite{falconeferretti}) and can be estimated as
\[
\|f(y(s), s, u(s))
-\textbf{I}_1[f](y(s),\overline t(s),\overline u(s))\|_\infty
\le L_f\,\Delta x .
\]


Finally, using the linearity of the interpolation operator $\textbf{I}_1$, the Lipschitz continuity of $f$ and Cauchy-Schwarz inequality, it can be proved that $\textbf{I}_1[f]$ is Lipschitz with constant $d C L_f$, where the multiplying $d$ is due to the equivalence of norms in finite-dimensional spaces, and the constant $C>0$ arises from the bound of the gradient of $\textbf{I}_1[f]$. Thus, we can write
\begin{align*}
\|\textbf{I}_1[f](y(s),\overline{t}(s),\overline{u}(s)) -& \textbf{I}_1[f](\overline{y}(s),\overline{t}(s),\overline{u}(s))\|_{\infty} \leq d C L_f \|y(s) - \overline{y}(s)\|_{\infty}.
\end{align*}

\medskip
\noindent
\textbf{Step 2. Integral inequality and Gronwall argument.}
Collecting the previous estimates{, we get}
\begin{align*}
\|y&(t) - \overline{y}(t)\|_{\infty} \leq \\
&\leq \int_{t_0}^{\left\lfloor\frac{t}{\Delta t}\right\rfloor \Delta t} \left( L_f (M_u + \Delta t) + L_f \Delta x + d C L_f \|y(s) - \overline{y}(t)\|_{\infty} \right) ds + 2 M_f \Delta t = \\
&=  d C L_f \int_{t_0}^{\left\lfloor\frac{t}{\Delta t}\right\rfloor \Delta t} \|y(s)-\overline{y}(t)\|_{\infty} ds + \left( \left\lfloor\frac{t}{\Delta t}\right\rfloor \Delta t - t_0 \right) L_f (M_u + \Delta t + \Delta x) + \\
&+ 2 M_f \Delta t .
\end{align*}
By calling $L_I:=d C L_f$ and applying Gronwall's lemma we obtain the desired equality
\begin{align*}
\|y&(t) - \overline{y}(t)\|_{\infty} \leq \\
&\leq e^{L_I (\left\lfloor\frac{t}{\Delta t}\right\rfloor \Delta t-t_0)} \left( \left(\left\lfloor\frac{t}{\Delta t}\right\rfloor \Delta t-t_0\right) L_f (M_u + \Delta t + \Delta x) + 2 M_f \Delta t\right) \leq \\
&\leq  e^{L_I (t-t_0)} \left( (t-t_0) L_f (M_u + \Delta t + \Delta x) + 2 M_f \Delta t\right).
\end{align*}
\end{proof}

\begin{lemma} \label{th:lemma2}
Assume \ref{hp:1}, \ref{hp:2} and \ref{hp:3} hold true. If $\lambda > L_I$, then for $\Delta t \in \left(0,\frac{1}{2 \lambda}\right]$ there exist positive constants $C_1$, $C_2$ and $C_3$ such that
\[
\left\lvert J(x,u) - J^{\left\lfloor \frac{t}{\Delta t} \right\rfloor \Delta t}_{\Delta t,\Delta x}(x,\uline{u}) \right\lvert \leq C_1 \Delta t + C_2 \Delta x + C_3 M_u \qquad \forall x \in \overline{\Omega},
\]
where $u \in \mathcal{U}$ and $\uline{u} = (u_0,...,u_{N_t})$, where $u_n = u(t_n)$ for any $n=0,...,N_t$.
\end{lemma}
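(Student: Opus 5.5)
Without loss of generality I take the initial time to be $t_0=t_n$ with $n=\lfloor t/\Delta t\rfloor$; the mismatch between $t$ and $t_n$ contributes at most $(M_g+\lambda M_\psi)\Delta t$ by boundedness of $g,\psi$. I then write the continuous cost piecewise as
\[
J(x,u)=\sum_{k=n}^{N_t-1}\int_{t_k}^{t_{k+1}} g(y(s),s,u(s))e^{-\lambda(s-t_n)}\,ds+e^{-\lambda(T-t_n)}\psi(y(T)),
\]
and compare it term by term with \eqref{eq:cost_discrete}. For each $k$ I split the difference between the interval integral and $\Delta t\,\textbf{I}_1[g](y_k,t_k,u_k)(1-\lambda\Delta t)^{k-n}$ into four pieces.

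\textbf{(a)} The first piece is $g(y(s),s,u(s))-g(y(s),t_k,u_k)$. By \ref{hp:2} it is bounded by $L_g(\Delta t+M_u)$, since $u_k=u(t_k)$.

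\textbf{(b)} The second piece is the interpolation error $g-\textbf{I}_1[g]$, which is bounded by $L_g\Delta x$.

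\textbf{(c)} The third piece is the spatial difference $\textbf{I}_1[g](y(s),\cdot)-\textbf{I}_1[g](\overline y(s),\cdot)$. By the Lipschitz property of $\textbf{I}_1[g]$ (constant $dCL_g$, as in the proof of Lemma \ref{th:lemma1}) and by Lemma \ref{th:lemma1}, it is bounded by $dCL_g e^{L_I(s-t_n)}\big((s-t_n)L_f(M_u+\Delta t+\Delta x)+2M_f\Delta t\big)$.

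\textbf{(d)} The fourth piece is the weight mismatch $|e^{-\lambda(s-t_n)}-(1-\lambda\Delta t)^{k-n}|$ multiplied by $M_g$.

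The final cost is handled in the same way. We have $|e^{-\lambda(T-t_n)}\psi(y(T))-(1-\lambda\Delta t)^{N_t-n}\psi(y_{N_t})|\le M_\psi|e^{-\lambda(T-t_n)}-(1-\lambda\Delta t)^{N_t-n}|+L_\psi e^{-\lambda(T-t_n)}\|y(T)-y_{N_t}\|_\infty$. The second term is controlled by Lemma \ref{th:lemma1}, and the product $e^{-\lambda(T-t_n)}e^{L_I(T-t_n)}(T-t_n)$ stays bounded uniformly because $\lambda>L_I$.

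Next I sum over $k$. The terms (a) and (b) carry the weight $e^{-\lambda(s-t_n)}$, so their sum is bounded by $\frac{1}{\lambda}L_g(\Delta t+M_u+\Delta x)$. For term (c), the product $e^{-\lambda(s-t_n)}e^{L_I(s-t_n)}$ is integrable on $[t_n,\infty)$ because $\lambda>L_I$. The integral $\int_0^\infty (\tau+1)e^{-(\lambda-L_I)\tau}d\tau$ is finite and independent of $T$, which yields $C(\Delta t+\Delta x+M_u)$. In term (c) I use the weight $e^{-\lambda(s-t_n)}$ rather than $(1-\lambda\Delta t)^{k-n}$, which is legitimate because I pass through the continuous weights first and only then swap weights in (d). This ordering is what avoids any dependence on the horizon.

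The main obstacle is the weight comparison (d): I need a linear-in-$\Delta t$ bound on $\Delta t\sum_k|e^{-\lambda(s-t_n)}-(1-\lambda\Delta t)^{k-n}|$ that is uniform in the number of steps. My first step is to split it as $|e^{-\lambda(s-t_n)}-e^{-\lambda(t_k-t_n)}|\le\lambda\Delta t\,e^{-\lambda(t_k-t_n)}$, whose sum is $O(\Delta t)$. The remaining part is $|e^{-\lambda m\Delta t}-(1-\lambda\Delta t)^m|$ with $m=k-n$. Writing $a=e^{-\lambda\Delta t}$ and $b=1-\lambda\Delta t$, I have $0\le a-b\le\lambda^2\Delta t^2/2$, and for $\Delta t\le\frac{1}{2\lambda}$ also $b\ge1/2$. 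This gives
\[
a^m-b^m\le m(a-b)a^{m-1}\le \tfrac{\lambda^2\Delta t^2}{2}\,m\,e^{-\lambda(m-1)\Delta t}.
\]
Summing over $m$ with the factor $\Delta t$ gives $\frac{\lambda^2\Delta t^3}{2}\sum_m m e^{-\lambda(m-1)\Delta t}\lesssim \frac{\lambda^2\Delta t^3}{2}\cdot\frac{1}{(\lambda\Delta t)^2}=O(\Delta t)$. The same estimate with $m=N_t-n$ gives $m\Delta t^2 e^{-\lambda m\Delta t}\le C\Delta t$ for the final-cost weight. Collecting all contributions yields the constants $C_1,C_2,C_3$.
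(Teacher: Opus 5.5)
Your decomposition is the paper's. Pieces (a)--(c) are its term $W$: time/control Lipschitz, interpolation error, and the $dCL_g$ Lipschitz bound for $\textbf{I}_1[g]$ combined with Lemma~\ref{th:lemma1}. Piece (d) is $X$, your reduction to $t=t_n$ plays the role of $Y$, and your final-cost estimate is $Z$. Where you differ is in execution:
\begin{itemize}
\item \emph{Weight mismatch.} The paper writes $(1-\lambda\Delta t)^m=e^{m\log(1-\lambda\Delta t)}$, applies the mean value theorem, bounds $e^{-\lambda\xi}$ by $1$, and controls $\theta(\Delta t)=\lvert\log(1-\lambda\Delta t)\rvert/(\lambda\Delta t)$ on $(0,\frac{1}{2\lambda}]$. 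Having discarded the decay, it gets a constant growing like $(T-t_0)^2$. Your split into a step-function error plus $a^m-b^m\le m(a-b)a^{m-1}$ is more elementary and keeps the geometric decay, so you get $O(\Delta t)$ uniformly in the horizon.
\item \emph{Use of $\lambda>L_I$.} By pairing $e^{-\lambda(s-t_n)}$ with $e^{L_I(s-t_n)}$ you actually use $\lambda>L_I$ to obtain horizon-independent constants. The paper's constants carry $e^{L_IT}$. For fixed $T$ both routes suffice for the statement.
\item \emph{Terminal cost.} The paper bounds $Z$ by $\max\{e^{-\lambda(T-t)},(1-\lambda\Delta t)^{N_t-n}\}\lvert\psi(y(T))-\psi(\overline y(T))\rvert$. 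This drops the term $M_\psi\lvert e^{-\lambda(T-t_n)}-(1-\lambda\Delta t)^{N_t-n}\rvert$, which you keep and show is $O(\Delta t)$, so your version is more complete here.
\item You also evaluate $g$ at $t_k$, as the discrete functional does, and pay $L_g\Delta t$ for it. The paper writes $\textbf{I}_1[g](\overline y(s),s,\overline u(s))$ instead.
\end{itemize}

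One step needs repair: the reduction from $t$ to $t_n$ is not controlled by boundedness of $g$ and $\psi$ alone. Restarting at $t_n$ from the same state $x$ changes the whole trajectory. The two solutions differ by up to $M_f\Delta t$ at time $t$, hence by up to $M_f\Delta t\,e^{L_f(s-t)}$ for $s\ge t$ by Gronwall. You therefore also pick up Lipschitz terms of size $L_gM_f\Delta t\int_0^{T-t}e^{(L_f-\lambda)\tau}\,d\tau+L_\psi M_f\Delta t\,e^{(L_f-\lambda)(T-t)}$. Alternatively, skip the reduction and feed the $O(M_f\Delta t)$ initial discrepancy directly into the Gronwall argument of Lemma~\ref{th:lemma1}. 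Either way the extra contribution is $O(\Delta t)$, so your conclusion stands. The paper glosses over the same point, since its $Y$ only covers the integral over $[t_n,t]$. Theorem~\ref{thm:conv} uses the lemma only at $t=t_n$, where the issue does not arise.
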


\begin{proof}
Fix $t\in(t_0,T)$, $x\in\Omega$ and $u\in\mathcal U$. Let $n:=\left\lfloor \dfrac{(t-t_0)}{\Delta t}\right\rfloor$
and define the discrete control sequence $\uline{u}=(u_0,\dots,u_N)$ by $u_k:=u(t_k)$.
Let $y$ be the solution of \eqref{eq:EDO_control} with control $u$ and initial condition $y(t)=x$. We denote by $\overline y$, as for Lemma 1, the piecewise constant extension of the numerical approximation $(y_0,...,y_{N_t})$ defined in \eqref{eq:EDO_discrete}.

We estimate the difference between the continuous cost functional $J$ defined in \eqref{eq:cost} and its discrete counterpart \eqref{eq:cost_discrete}. Writing the discrete functional in integral form on each time interval, we obtain
\begin{align*}
\bigg\lvert J&(x,u) - \left. J^{t_n}_{\Delta t,\Delta x}(x,\uline{u}) \right\rvert \leq \left\lvert \int_t^T g (y(s),s,u(s)) e^{-\lambda (s-t)} ds + e^{-\lambda (T-t)} \psi(y(T)) -\right. \\
&-\left. \Delta t \sum_{k=n}^{N_t-1} g(y_k,t_k,u_k) (1-\lambda \Delta t)^{k-n} + (1-\lambda \Delta t)^{N_t-n} \psi(y_{N_t})\right\rvert = \\
&= \left\lvert \int_t^T g (y(s),s,u(s)) e^{-\lambda (s-t)} ds + e^{-\lambda (T-t)} \psi(y(T)) -\right. \\
&-\left. \int_{t_n}^T \textbf{I}_1[g](\overline{y}(s),s,\overline{u}(s)) (1-\lambda \Delta t)^{\left\lfloor \frac{s-t_n}{\Delta t} \right\rfloor}ds + (1-\lambda \Delta t)^{N_t-n} \psi(y_{N_t})\right\rvert \leq \\
&\leq \int_{t_n}^T \lvert g(y(s),s,u(s)) - \textbf{I}_1[g](\overline{y}(s),s,\overline{u}(s))\rvert e^{-\lambda (s-t)} ds + \\
&+ \int_{t_n}^T \left\lvert\textbf{I}_1[g](\overline{y}(s),s,\overline{u}(s))\right\rvert \left\lvert e^{-\lambda (s-t)} - (1-\lambda \Delta t)^{\left\lfloor \frac{s-t_n}{\Delta t}\right\rfloor} \right\rvert ds + \\
&+ \int_{t_n}^t \lvert\textbf{I}_1[g](\overline{y}(s),s,\overline{u}(s))\rvert (1-\lambda \Delta t)^{\left\lfloor \frac{s-t_n}{\Delta t} \right\rfloor} ds + \\
&+ \left\lvert e^{-\lambda (T-t)} \psi(y(T)) - (1-\lambda \Delta t)^{N_t-n} \psi(\overline{y}(T)) \right\rvert = \\
&=: W + X + Y + Z.
\end{align*}

\medskip
\noindent
\textbf{Step 1. Estimate of $Y$.}
Since $g$ is bounded and the interpolation operator $\textbf{I}_1$ is linear, we can conclude that $\textbf{I}_1[g]$ is bounded, and so we have
\begin{align*}
Y &= \int_{t_n}^t \lvert\textbf{I}_1[g](\overline{y}(s),s,\overline{u}(s))\rvert (1-\lambda \Delta t)^{\left\lfloor \frac{s-t_n}{\Delta t} \right\rfloor} ds \leq \\
&\leq M_g \int_{t_n}^t (1-\lambda \Delta t)^{\left\lfloor \frac{s-t_n}{\Delta t} \right\rfloor} ds = \\
&= M_g (1-\lambda \Delta t)^{\left\lfloor \frac{t-t_n}{\Delta t} \right\rfloor} \left( t - t_n \right) \leq M_g (1-\lambda \Delta t)^{N_t} \Delta t \leq M_g \Delta t,
\end{align*}
where we used the fact that since $\frac{1}{2} \leq 1 - \lambda \Delta t < 1$, we have $(1-\lambda \Delta t)^{N_t} \leq 1$.

\medskip
\noindent
\textbf{Step 2. Estimate of $Z$ (terminal cost).}
To bound $Z$, we use that $\psi$ is Lipschitz continuous and Lemma \ref{th:lemma1}:
\begin{align*}
Z=\left\lvert e^{-\lambda (T-t)}\right. &\psi(y(T)) - \left.(1-\lambda \Delta t)^{N_t-\left\lfloor \frac{t}{\Delta t} \right\rfloor \Delta t} \psi(\overline{y}(T)) \right\rvert \leq \\
&\leq \max\{e^{-\lambda (T-t)},(1-\lambda \Delta t)^{N_t-\left\lfloor \frac{t}{\Delta t} \right\rfloor \Delta t}\} \lvert\psi(y(T)) - \psi(\overline{y}(T))\rvert \leq \\
&\leq C_4 \|y(T) - \overline{y}(T)\|_{\infty} \leq \\
&\leq C_4 e^{L_I T} \left( T L_f (M_u + \Delta t + \Delta x) + 2 M_f \Delta t\right) = \\
&=: K_1 M_u + K_2 \Delta x + K_3 \Delta t,
\end{align*}
where $C_4$ is a constant bounding $\max\{e^{-\lambda (T-t)},(1-\lambda \Delta t)^{N_t-\left\lfloor \frac{t}{\Delta t} \right\rfloor \Delta t}\}$ for all $t \in (t_0,T)$.

\medskip
\noindent
\textbf{Step 3. Estimate of $W$}
We decompose the integrand similarly to Lemma~1:

\begin{align*}
\lvert g(y(s),s,u(s)) &- \textbf{I}_1[g](\overline{y}(s),s,\overline{u}(s)) \rvert \leq \lvert g(y(s),s,u(s)) - g(y(s),s,\overline{u}(s))\rvert + \\
&+ \lvert g(y(s),s,\overline{u}(s)) - \textbf{I}_1[g](y(s),s,\overline{u}(s))\rvert + \\
&+ \lvert \textbf{I}_1[g](y(s),s,\overline{u}(s)) - \textbf{I}_1[g](\overline{y}(s),s,\overline{u}(s))\rvert \leq \\
&\leq L_g M_u + L_g \Delta x + d C L_g \|y(s) - \overline{y}(t)\|_{\infty} \leq \\
&\leq L_g M_u + L_g \Delta x + d C L_g e^{L_I s} \left( s L_f (M_u + \Delta t + \Delta x) + 2 M_f \Delta t \right),
\end{align*}
from which
\begin{align*}
W &= \int_{t_n}^T \lvert g(y(s),s,u(s)) - \textbf{I}_1[g](\overline{y}(s),s,\overline{u}(s))\rvert e^{-\lambda (s-t)} ds \leq \\
&\leq \int_{t_n}^T \left( L_g M_u + L_g \Delta x + \right. \\
&+\left. d C L_g e^{L_I s} \left( s L_f (M_u + \Delta t + \Delta x) + 2 M_f \Delta t \right) \right) e^{-\lambda (s-t)} ds = \\
&= C_1^{\prime} M_u + C_2^{\prime} \Delta x + d C L_g \int_{t_n}^T e^{(L_I - \lambda)s + \lambda t} \left( s L_f (M_u + \Delta t + \Delta x) + 2 M_f \Delta t \right) ds = \\
&= C_1^{\prime\prime} M_u + C_2^{\prime\prime} \Delta x + C_3^{\prime\prime} \Delta t.
\end{align*}

\medskip
\noindent
\textbf{Step 4. Estimate of $X$ (discount discretization).}

We use the boundedness of $\textbf{I}_1[g]$ and rewrite $(1-\lambda \Delta t)^{\left\lfloor \frac{s-t_n}{\Delta t}\right\rfloor}$ to get
\begin{align*}
X &= \int_{t_n}^T \lvert \textbf{I}_1[g](\overline{y}(s),s,\overline{u}(s)) \rvert \left\lvert e^{-\lambda (s-t)} - (1-\lambda \Delta t)^{\left\lfloor \frac{s-t_n}{\Delta t}\right\rfloor} \right\rvert ds \leq \\
&\leq M_g \int_{t_n}^T \left\lvert e^{-\lambda (s-t)} - (1-\lambda \Delta t)^{\left\lfloor \frac{s-t_n}{\Delta t} \right\rfloor} \right\rvert ds = \\
&= M_g \int_{t_n}^T \left\lvert e^{-\lambda (s-t)} - e^{\left\lfloor \frac{s-t_n}{\Delta t} \right\rfloor\log{(1-\lambda \Delta t)}} \right\rvert ds.
\end{align*}
Then, by applying Lagrange’s theorem and multiplying the last term inside the modulus by $\Delta t$, we obtain
\begin{align*}
X &\leq M_g \int_{t_n}^T e^{-\lambda \xi} \left\lvert \lambda (s-t) - \left\lfloor \frac{s-t_n}{\Delta t} \right\rfloor \lvert \log{(1-\lambda \Delta t)}\rvert \right\rvert ds = \\
&= M_g e^{-\lambda \xi} \int_{t_n}^T \lambda \left\lvert (s-t) - \left\lfloor \frac{s-t_n}{\Delta t} \right\rfloor \frac{\lvert \log{(1-\lambda \Delta t)}\rvert}{\lambda \Delta t} \Delta t \right\rvert ds = \\
&= M_g \lambda e^{-\lambda \xi} \int_{t_n}^T \left\lvert (s-t) - \left\lfloor \frac{s-t_n}{\Delta t} \right\rfloor \theta(\Delta t) \Delta t \right\rvert ds,
\end{align*}
where we have defined $\theta(\Delta t):=\frac{\lvert \log(1-\lambda \Delta t)\rvert }{\lambda \Delta t}$ (that is equal to $\frac{\log(1-\lambda \Delta t)}{\lambda \Delta t}$ since $1-\lambda \Delta t < 1$) and $\xi \in \left(t_n,T\right)$ is given by Lagrange's theorem. Differentiating it, we observe that $\theta$ is an increasing function of $\Delta t$ in the interval $\left(0,\frac{1}{2\lambda}\right)$. By de l'Hôpital's theorem, we get that $\theta(0) = 1$ (which is the minimum of $\theta$ in the chosen interval), and, by a direct computation, $\theta\left(\frac{1}{2\lambda}\right) = 2 \log(2)$ (which is the maximum of $\theta$ in the chosen interval). At this point we can bound
\[
\left\lvert (s-t_n)-\theta(\Delta t) \left\lfloor \frac{s-t_n}{\Delta t} \right\rfloor \Delta t \right\rvert \leq (\theta(\Delta t) -1) (s-t_n) + \Delta t,
\]
which is obtained recalling that $\left\lfloor \frac{s-t_n}{\Delta t} \right\rfloor \Delta t - \Delta t \leq s-t \leq \left\lfloor \frac{s-t_n}{\Delta t} \right\rfloor \Delta t + \Delta t$.
 Then we can write, using again the properties of positiveness of $\theta$,
\begin{align*}
X &\leq M_g \lambda e^{-\lambda \xi} \int_{t_n}^T \left\lvert (s-t) - \left\lfloor \frac{s-t_n}{\Delta t} \right\rfloor  \theta(\Delta t) \Delta t \right\rvert ds \leq\\
&\leq M_g \lambda e^{-\lambda \xi} \int_{t_n}^T \left( (\theta(\Delta t) -1) (s-t_n) + \Delta t \right) ds \leq \\
&\leq M_g \lambda e^{-\lambda \xi} \int_{t_n}^T \left( (\theta(\Delta t) -1) s + \Delta t \right) ds \leq \\
&\leq M_g \lambda e^{-\lambda \xi} \Delta t \left(\frac{\theta(\Delta t) - 1}{\Delta t} \frac{(T-t_0)^2}{2} + (T-t_0) \right).
\end{align*}
By another differentiation, we get that $\frac{\theta -1}{\Delta t}$ is an increasing function of $\Delta t$ in the interval $\left(0,\frac{1}{2\lambda}\right)$, and it attains its maximum in the point $\frac{1}{2\lambda}$, where its value is $2\lambda (2 \log(2)$, so we can write
\[
X \leq C_5 \Delta t.
\]
\medskip
\noindent
\textbf{Conclusion.}
Combining the estimates for $W,X,Y,Z$ we obtain
\[
\left\lvert J(x,t,u)-J^{t_n}_{\Delta t,\Delta x}(x,u)\right\rvert
\le C_1\Delta t + C_2\Delta x + C_3 M_u,
\]
which concludes the proof.
\end{proof}

\begin{theorem}\label{thm:conv}
Assume \ref{hp:1}, \ref{hp:2} and \ref{hp:3}. If $\lambda > L_I$, then for $\Delta t \in \left(0,\frac{1}{2 \lambda}\right]$ there exist positive constants $C_1$, $C_2$ and $C_3$ such that
\[
\|v-V\|_{\infty} \leq C_1 \Delta t + C_2 \Delta x + C_3 M_u.
\]
\end{theorem}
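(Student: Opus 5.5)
The plan is to combine the representation formulas for $v$ and $V$ with the uniform functional estimate of Lemma~\ref{th:lemma2}, comparing the two infima through two one-sided inequalities. Fix $x\in\overline\Omega$ and a grid time $t_n$. By definition $v(x,t_n)=\inf_{u\in\mathcal U}J(x,t_n,u)$, and by Theorem~\ref{th:1}, $V(x,t_n)=\inf_{\uline u\in U^{N_t-n+1}}J^{t_n}_{\Delta t,\Delta x}(x,\uline u)$. Since the constants in Lemma~\ref{th:lemma2} do not depend on $x$, $t$ or the control, it suffices to show
\[
V(x,t_n)\le v(x,t_n)+\varepsilon_h,\qquad v(x,t_n)\le V(x,t_n)+\varepsilon_h,\qquad \varepsilon_h:=C_1\Delta t+C_2\Delta x+C_3M_u .
\]

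\textbf{Upper bound for $V$.} Take $\delta>0$ and a $\delta$-optimal control $u^\delta\in\mathcal U$ for $v(x,t_n)$. Sample it, $u_k:=u^\delta(t_k)$, so that $\uline u$ is exactly the sequence considered in Lemma~\ref{th:lemma2}. The lemma, together with Theorem~\ref{th:1}, gives
\[
V(x,t_n)\le J^{t_n}_{\Delta t,\Delta x}(x,\uline u)\le J(x,t_n,u^\delta)+\varepsilon_h\le v(x,t_n)+\delta+\varepsilon_h ,
\]
and letting $\delta\to0$ yields the first inequality.

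\textbf{Lower bound for $V$.} Take a $\delta$-optimal discrete sequence $\uline u^\delta$ for $V(x,t_n)$, and define the piecewise constant control $u(s):=u^\delta_k$ for $s\in[t_k,t_{k+1})$. This $u$ is measurable, hence admissible, and its samples are $u(t_k)=u^\delta_k$. Lemma~\ref{th:lemma2} then gives
\[
v(x,t_n)\le J(x,t_n,u)\le J^{t_n}_{\Delta t,\Delta x}(x,\uline u^\delta)+\varepsilon_h\le V(x,t_n)+\delta+\varepsilon_h .
\]

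\textbf{Main obstacle.} The difficulty is the meaning of $M_u$. It is defined relative to a single control, whereas here it must hold uniformly over the near-optimal controls used on both sides. In the lower bound this causes no trouble: the control is piecewise constant on the grid, so its oscillation term vanishes. In the upper bound, $u^\delta$ may oscillate inside a cell. I would handle this by interpreting $M_u$ as a uniform bound over the admissible class, e.g.\ controls with a uniform Lipschitz or modulus bound, as the paper's remark suggests. Failing that, I would restrict to piecewise constant near-optimal controls, using density together with continuous dependence of $J$ on the control in $L^1$, which follows from \ref{hp:1}--\ref{hp:2} and Gronwall.

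\textbf{Conclusion.} Taking the supremum over the nodes and grid times, or over $x\in\overline\Omega$, completes the proof; the hypotheses $\lambda>L_I$ and $\Delta t\le 1/(2\lambda)$ are used only to invoke Lemma~\ref{th:lemma2}. To control $\|v-V\|_\infty$ at non-grid times $t$, I would add the time-Lipschitz regularity of $v$ at rate $O(\Delta t)$, which is standard from the DPP under the stated hypotheses.
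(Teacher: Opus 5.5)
Your proof is essentially the paper's: two one-sided comparisons via Lemma~\ref{th:lemma2}, applied to the samples of a near-optimal continuous control and to a continuous extension of a near-optimal discrete sequence. Your use of $\delta$-optimal controls and of the piecewise constant extension (which makes the oscillation term vanish on that side) is slightly more careful than the paper, which assumes exact minimizers exist and leaves the extension $u^*$ unspecified. On the $M_u$ issue, the consistent reading, which is the paper's implicitly, keeps $\mathcal U$ as all measurable controls and takes $M_u$ to be the oscillation of the (near-)optimal controls for $v$; neither of your fallbacks works as stated, since restricting $\mathcal U$ to uniformly Lipschitz controls would make your piecewise constant control inadmissible in the lower bound, and controls piecewise constant on the fixed $\Delta t$-grid are not dense, while approximants on finer grids inherit the oscillation of the control they approximate.
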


\begin{proof}
Let $x \in \overline{\Omega}$ and $n \in \{0,...,N_t\}$.

By theorem \ref{th:1}, there exists $\uline{u}^* = (u_0^*,...,u^*_{N_t}) \in U^{N_t+1}$ such that
\[
V(x,t_n) = J^{t_n}_{\Delta t,\Delta x}(x,\uline{u}^*) = \inf_{\uline{u} \in U^{N_t+1}} J^{t_n}_{\Delta t,\Delta x}(x,\uline{u}).
\]
Then
\[
v(x,t_n) - V(x,t_n) \leq J(x,t_n,u^*) - J^{t_n}_{\Delta t,\Delta x}(x,\uline{u}^*),
\]
where $u^* \in \mathcal{U}$ is such that $u^*(t_n) = u^*_n$ for any $ n=0,...,N_t$. From Lemma \ref{th:lemma2}
\[
v(x,t_n) - V(x,t_n) \leq C_1^{\prime}\Delta t + C_2^{\prime}\Delta x + C_3^{\prime} M_u .
\]

Let now $\tilde{u} \in \mathcal{U}$ be the control minimizing $J(x,t_n,\cdot)$ on $\mathcal{U}$, and $\uline{\tilde{u}} = (\tilde{u}_0,...,\tilde{u}_{N_t})$. Then, from Lemma \ref{th:lemma2},
\begin{align*}
V(x,t_n) - v(x,t_n) &\leq J^{t_n}_{\Delta t,\Delta x}(x,\uline{\tilde{u}}) - J(x,t_n,\tilde{u}) \leq \\
&\leq C_1^{\prime\prime} \Delta t + C_2^{\prime\prime}\Delta x + C_3^{\prime\prime} M_u .
\end{align*}
Let $C_{1,n}\Delta t + C_{2,n} \Delta x + C_{3,n} M_u := \max\{C_1^{\prime}\Delta t + C_2^{\prime}\Delta x + C_3^{\prime} M_u, C_1^{\prime\prime} \Delta t + C_2^{\prime\prime}\Delta x + C_3^{\prime\prime} M_u\}$. We get
\[
\lvert v(x,t_n) - V(x,t_n)\rvert \leq C_{1,n}\Delta t + C_{2,n} \Delta x + C_{3,n} M_u,
\]
and then, since $x \in \Omega$ is arbitrary,
\[
\|v(\cdot,t_n) - V(\cdot,t_n)\|_{\infty} \leq C_{1,n}\Delta t + C_{2,n} \Delta x + C_{3,n} M_u.
\]

We can repeat the computations above for each $n \in \{0,...,N_t-1\}$. Let $C_1\Delta t + C_2 \Delta x + C_3 M_u := \max_{n \in \{0,...,N_t-1\}} \{C_{1,n}\Delta t + C_{2,n} \Delta x + C_{3,n} M_u\} $. Then we have
\[
\|v-V\|_{\infty} \leq C_1\Delta t + C_2 \Delta x + C_3 M_u.
\]
For $n=N_t$ the approximation is exact.
\end{proof}

\section{Numerical experiments}\label{sec:4}

In this section we validate the theoretical results derived in Section \ref{sec:3} (Theorem \ref{thm:conv}) by computing
the Experimental Order of Convergence (EOC) of the SL scheme on two test problems. In both cases, we know the exact value function to compute the error. The optimization in \eqref{eq:SL} has been computed by comparison after the discretization of the control space.

The first example satisfies all the assumptions of the analysis, including the presence
of a positive discount factor. 

The second example considers an undiscounted problem, i.e. $\lambda = 0$, which falls
outside the theoretical framework developed in this paper. Although no convergence
result is proven in this case, the numerical experiment provides evidence that the
scheme still exhibits first order convergence, suggesting that the improved error
estimate may extend beyond the discounted setting.

In all experiments, we consider uniform refinements with $\Delta t = \Delta x$ and
measure the relative error with respect to the exact value function in the infinity vector norm $\|\cdot\|_{\infty}$.

\paragraph{Test 1.}

We consider the dynamics $f(y,t,u) = u$ in \eqref{eq:EDO_control} and consider the discounted finite horizon cost \eqref{eq:cost} with
\[
g(x,t,u) = \frac{1}{2} u^2, \qquad \psi(x) = \frac{1}{2}x^2, \qquad \lambda = 1.
\]
The exact value function for this control problem has the following structure
\begin{equation}\label{eq:V_y}
    v(t,x)=\frac{1}{2}P(t)\,x^2,
\end{equation}
where $P(t)$ solves a backward ODE of the form
\begin{align}
\left\{
\begin{aligned}
  -\dot{P}(s) &= P(s)  +  P^2(s), \qquad s\in[0,T]   \\
  P(T) &= 1,
\end{aligned}
\right.
\end{align}
whose solution is given by $P(t) = \frac{e^{T-t}}{2-e^{T-t}}$.

Finally, the optimal control for this problem is given by
\[
u^*(x(t))=-P(t)x(t).
\]

In our test, we will consider $x\in[-1,1],\ T =1 .$ For this purpose the control set will be $U =[-1,1].$ In Figure \ref{fig:exact1}, we show the contour lines of the exact value function in the left panel whereas the contour lines of the optimal control are given in the right plot. 


\begin{figure}[htbp]
    \centering
    \includegraphics[scale=0.3]{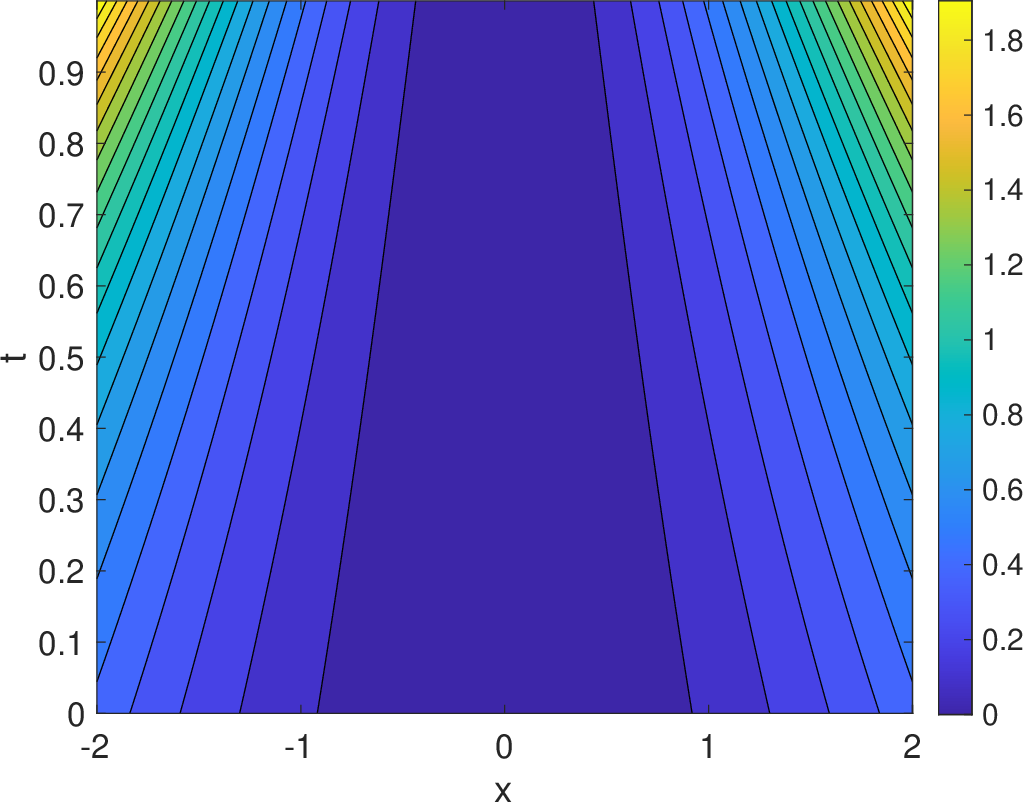}
    \includegraphics[scale=0.3]{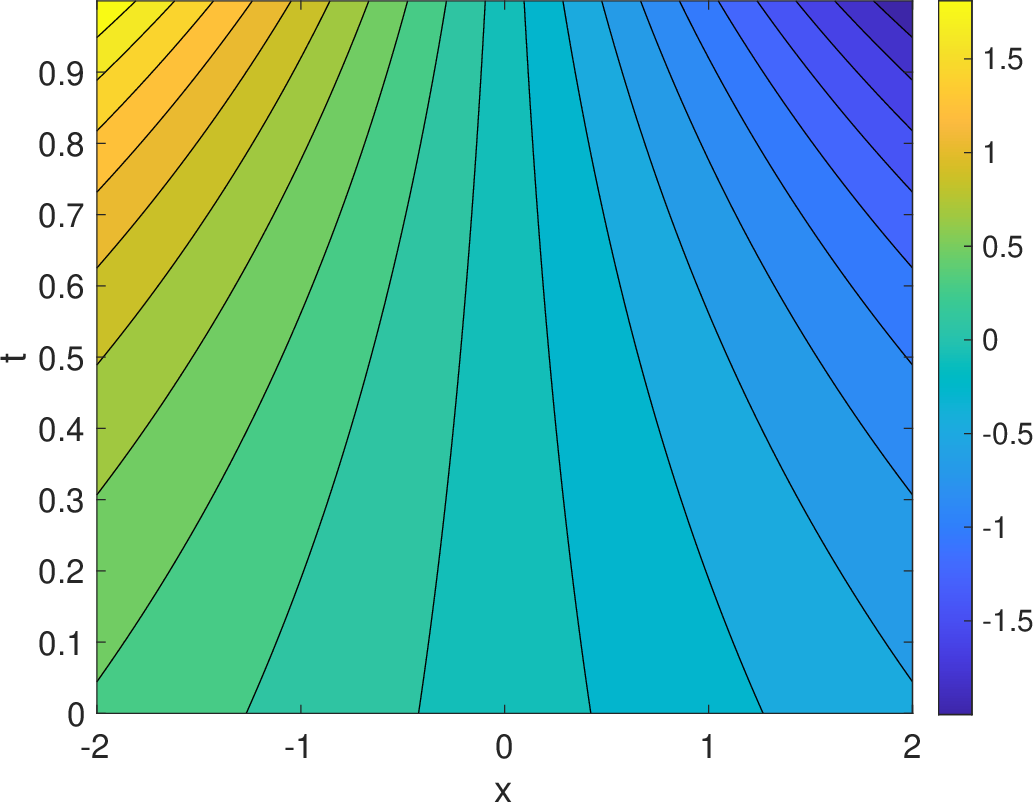}
    \caption{Test 1: Exact value function (left), optimal control (right).}
    \label{fig:exact1}
\end{figure}

In our numerical methods we set $\Delta x = \Delta t$ and we perform our tests for $\Delta x = \frac{0.1}{2^{i-1}}$ with $i=1,..., 5$. The control space was discretized keeping the same number of points of the spatial domains

The results are shown in Table \ref{tab:eoc1} and reflect the theoretical results from Theorem \ref{thm:conv}.

\begin{table}[h]
\centering
\begin{tabular}{|c|c|c|c|} \hline
$\Delta t$ & $\Delta x$ & error norm & EOC \\
\hline \hline
\phantom{0000}0.1 & \phantom{0000}0.1 & 0.0198 & - \\
\hline
\phantom{000}0.05 & \phantom{000}0.05 & 0.0100 &  0.9833    \\
\hline
\phantom{00}0.025 & \phantom{00}0.025 & 0.0025 &  0.9911    \\
\hline
\phantom{0}0.0125 & \phantom{0}0.0125 & 0.0013 &  0.9957  \\
\hline
0.00625 & 0.00625 & 0.00094 &  0.9979\\
\hline
\end{tabular}
\caption{Test 1: Error and EOC for the selected values of $\Delta t$ and $\Delta x$.}
\label{tab:eoc1}
\end{table}

For the sake of completeness we provide in the left panel of Figure \ref{fig1:exact1} the error convergence to help the reader to visualize it. In the right panel of the same figure we show the contour lines of the approximation value function computed with $\Delta x = 0.00625$ (see also left panel of Figure \ref{fig:exact1} for a comparison).

\begin{figure}[htbp]
    \centering
    \includegraphics[scale=0.3]{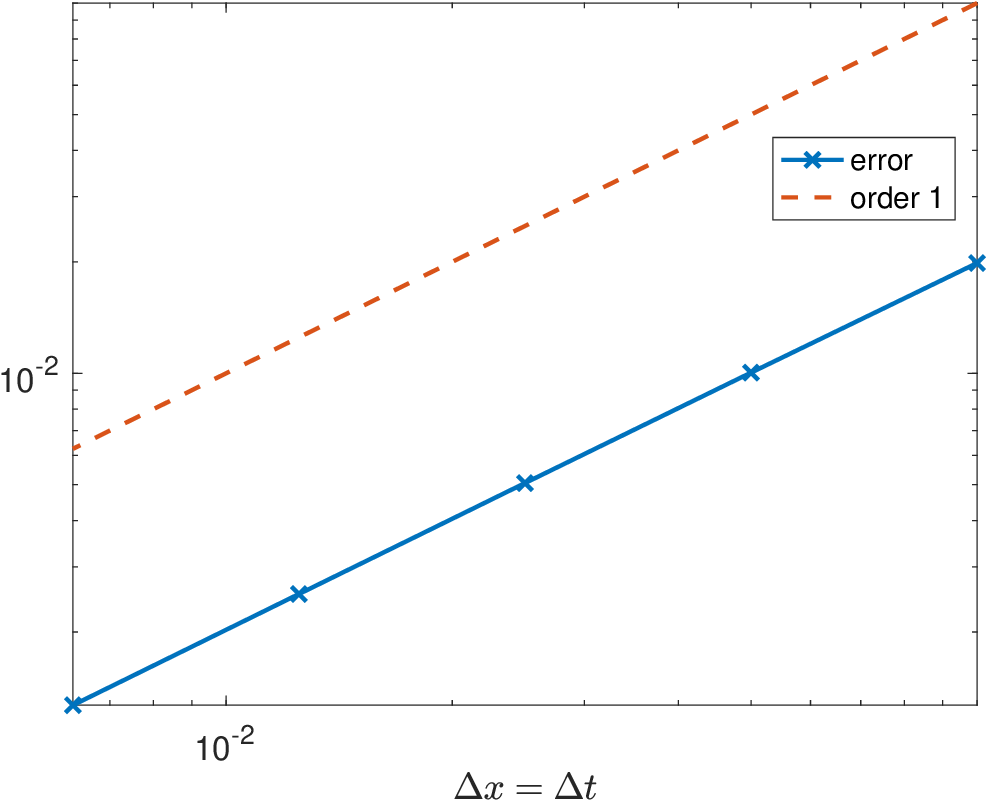}
    \includegraphics[scale=0.3]{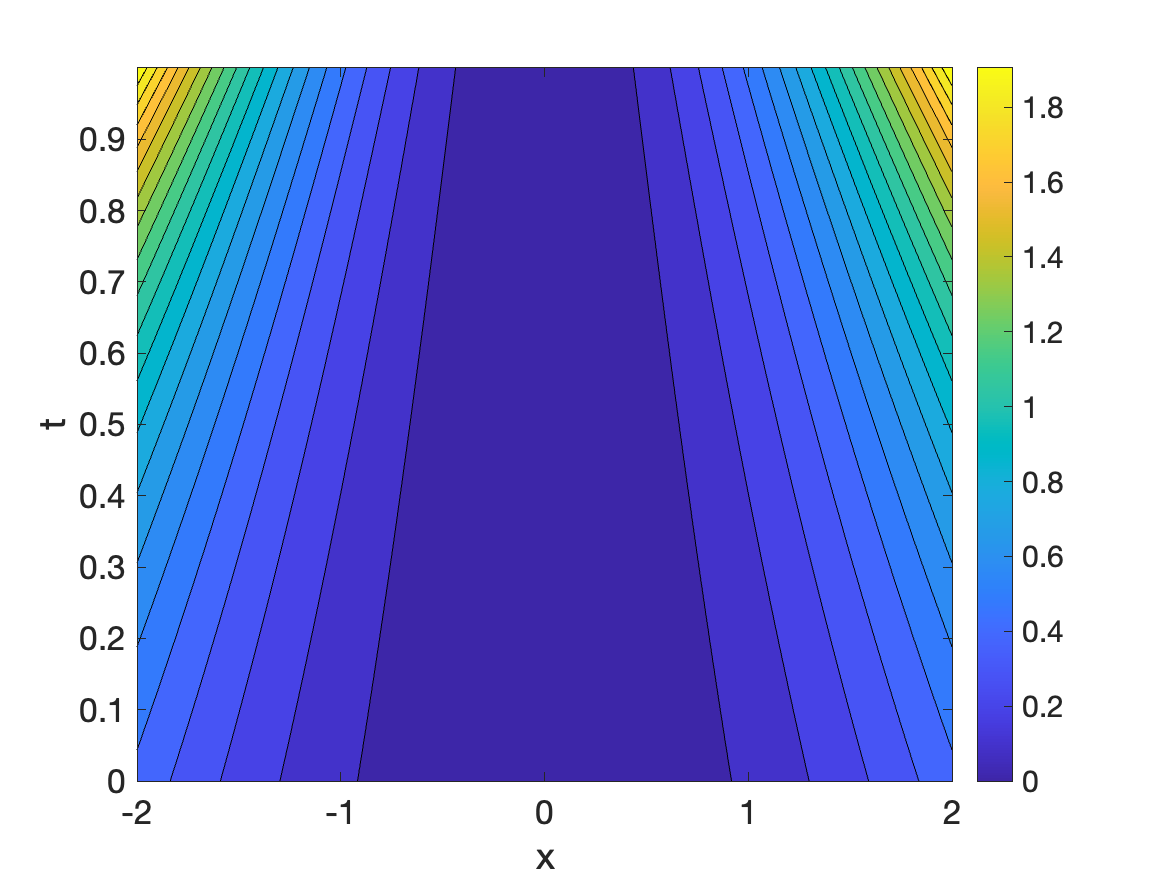}
    \caption{Test 1: Error convergence (left), approximated value function for $\Delta x = 0.00625$ (right).}
    \label{fig1:exact1}
\end{figure}


\paragraph{Test 2.}
In this example, we provide a test which goes beyond the assumption provided throughout the paper, indeed our discount factor will be $\lambda =0.$
This example is taken from \cite[Test 1]{AFS}. Here, we consider a two dimensional example and we will fix $\Omega = [-1,1]^2 \subset \mathbb{R}^2$, whose points we will denote $x = \begin{pmatrix} x_1 \\ x_2 \end{pmatrix}$, $U = [-1,1] \subset \mathbb{R}$, $t_0 = 0$ and $T=1$.

The dynamics, the running cost, final cost and discount factor are given by
\[
f(y,t,u) = \begin{pmatrix} u \\ y_1^2 \end{pmatrix},\qquad g(x,t,u) = 0, \qquad \psi(x) = -x_2, \qquad \lambda = 0,
\]
so the cost functional is given only by the final cost:

\[
J(x,t,u) =  -y_2(T),
\]
where $y(t) = (y_1(t), y_2(t))$ is the solution of \eqref{eq:EDO_control}. 

The exact value function for this problem reads
\[
v(x,t) = -x_2-x_1^2(T-t) - \frac{1}{3}(T-t)^3 - \lvert x_1\rvert (T-t)^2
\]
and it is shown in the left plot of Figure \ref{fig:exact2}.

\begin{figure}[htbp]
    \centering
    \includegraphics[scale=0.25]{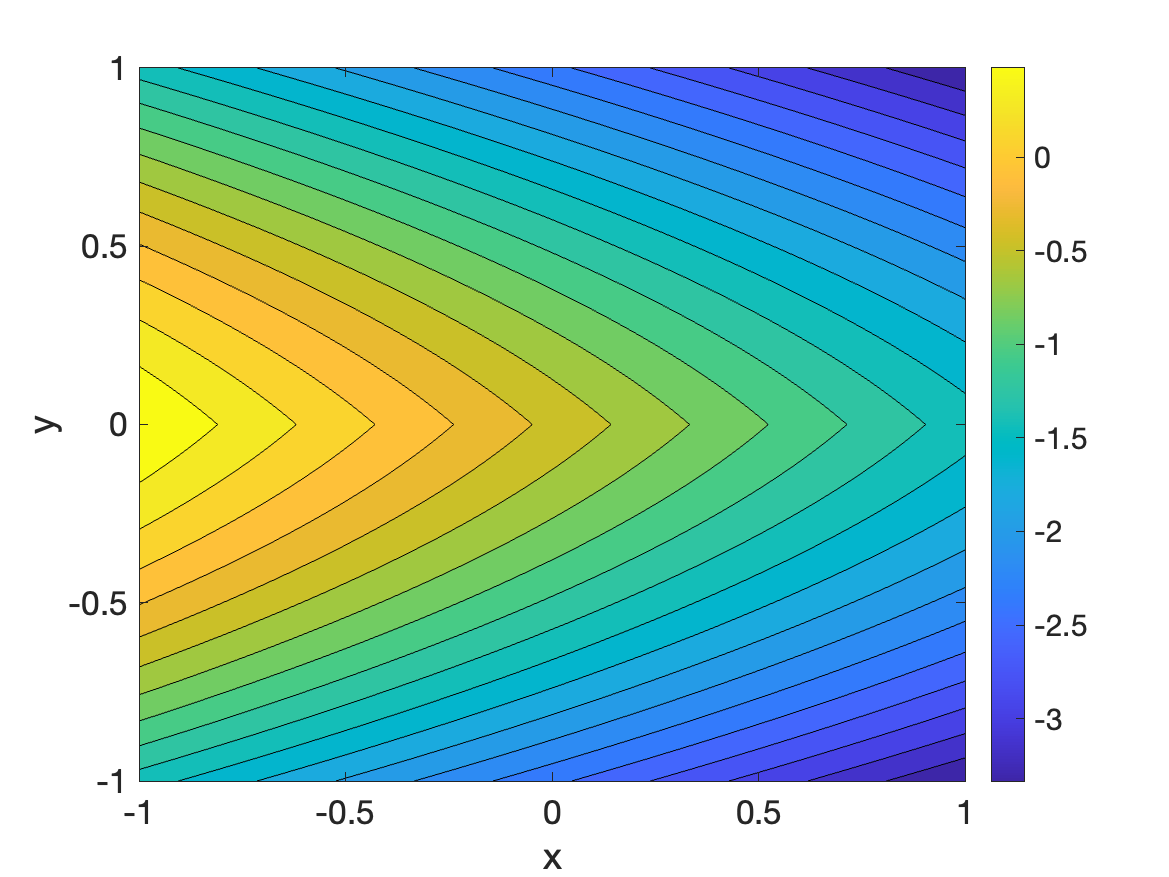}
    \includegraphics[scale=0.25]{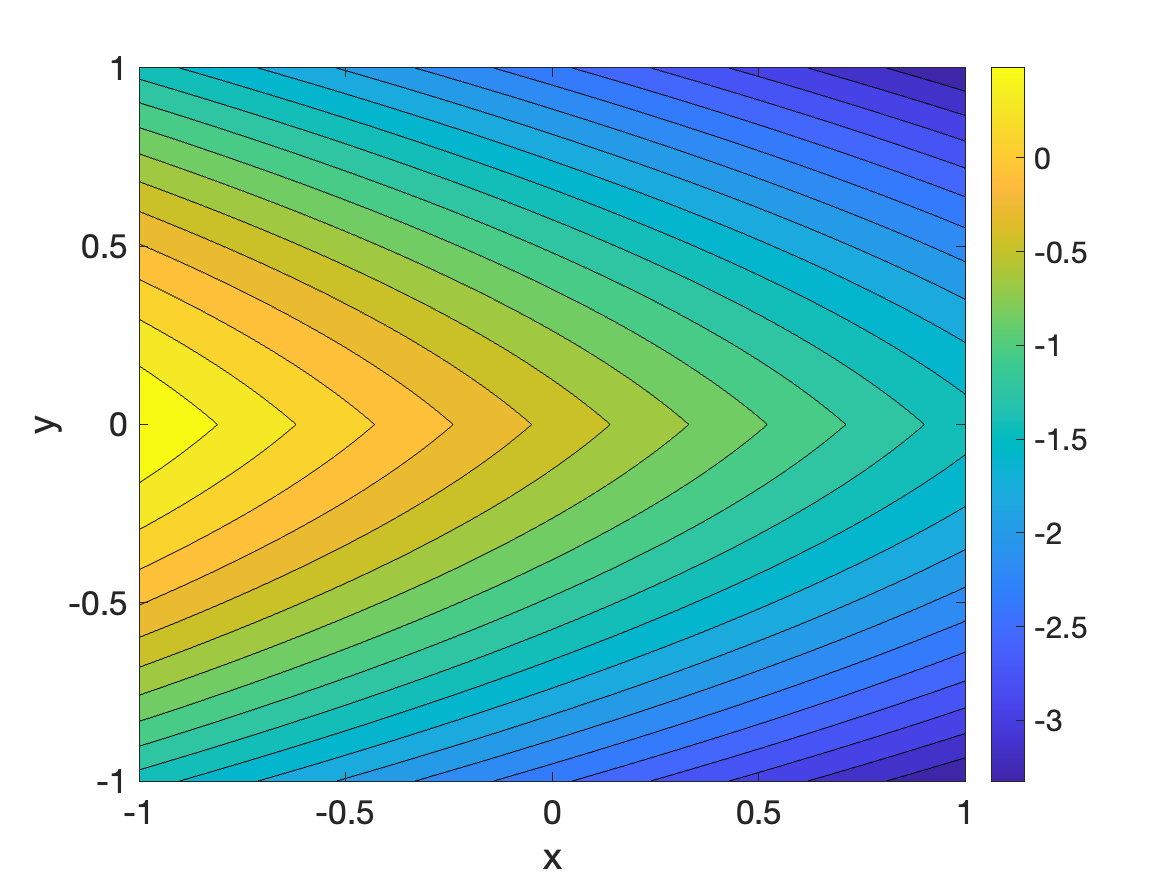}
    \caption{Test 2: Exact value function (left), approximate value function (right).}
    \label{fig:exact2}
\end{figure}


Our numerical error analysis is shown in Table \ref{tab:eoc2} and confirms the linear convergence both in space and time as for Theorem \ref{thm:conv}.


\begin{table}[h]
\centering
\begin{tabular}{|c|c|c|c|} \hline
$\Delta t$ & $\Delta x$ & error norm & EOC \\
\hline \hline
\phantom{0000}0.1 & \phantom{0000}0.1 & 0.1583 & - \\
\hline
\phantom{000}0.05 & \phantom{000}0.05 & 0.0771 & 1.0385 \\
\hline
\phantom{00}0.025 & \phantom{00}0.025 & 0.0380 & 1.0196 \\
\hline
\phantom{0}0.0125 & \phantom{0}0.0125 & 0.0189 & 1.0099\\
\hline
0.00625 & 0.00625 & 0.0094 & 1.0050\\
\hline
\end{tabular}
\caption{Test 2: Error and EOC for the selected values of $\Delta t$ and $\Delta x$.}
\label{tab:eoc2}
\end{table}


\begin{figure}
\centering
\includegraphics[scale=0.3]{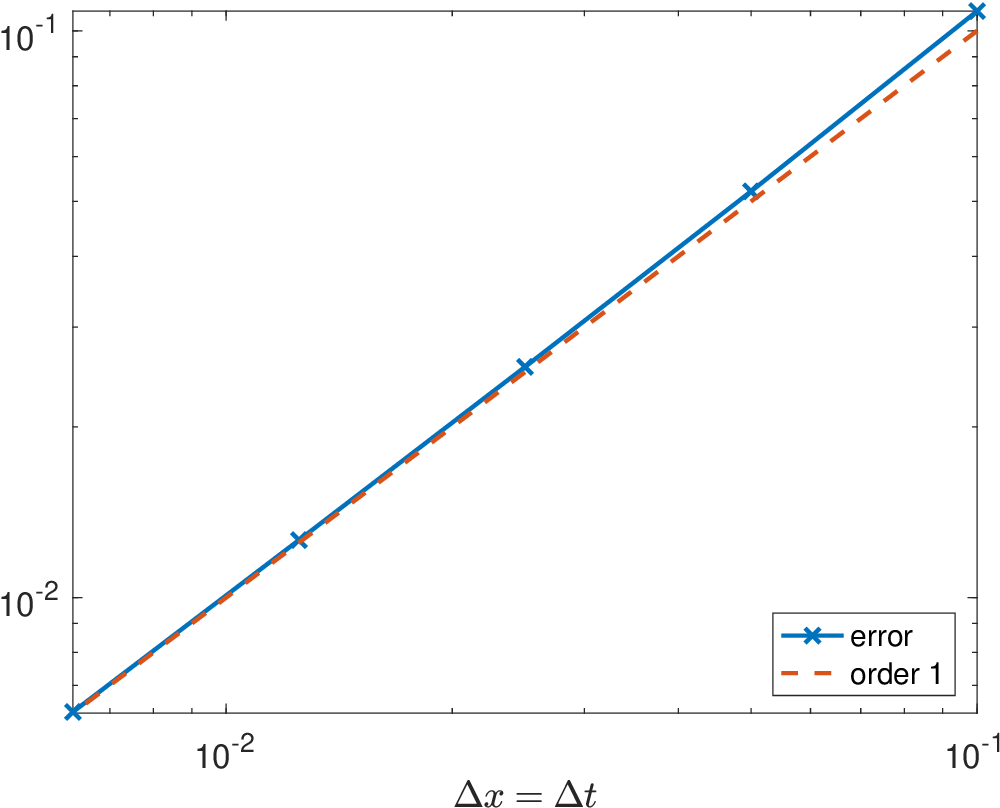}
\caption{Test 2: Experimental order of convergence for our example.}
\label{fig:eoc2}
\end{figure}

For completeness we also provide the contour lines of the approximate value function with $\Delta x = 0.0125 = \Delta t$ in the right plot of Figure \ref{fig:exact2}. The decay of the error and the EOC is finally shown in Figure \ref{fig:eoc2}.


\section{Conclusions}\label{sec:5}
In this work, we have analyzed the error for SL approximation schemes for HJB equations arising from finite horizon optimal control problems. By extending the techniques developed for the infinite horizon case, we derived improved error estimates that remove the mixed space–time term appearing in classical analyses. Under standard regularity assumptions on the dynamics, the running cost, and the final cost, we proved that the numerical value function converges linearly with respect to the time step and the spatial mesh size, in agreement with what is commonly observed in numerical simulations.

The numerical experiments confirm the theoretical findings and illustrate the sharpness of the new error bound. In particular, the test case with an explicit solution shows first-order convergence in both space and time, while an additional experiment in the undiscounted setting suggests that the same convergence behavior may persist even when the discount factor vanishes. This observation opens several directions for future research, including the extension of the present analysis to the case $\lambda = 0$, as well as to more general classes of control problems and discretization schemes.

\section*{Funding}


The authors are member of the GNCS group of INDAM. This work was partially by MUR-PRIN
Project 2022 PNRR (No. P2022JC95T) “Data-driven discovery and control of multi-scale interacting artificial agent systems”.

\printbibliography

@article{
DN,
author = {de Frutos, J. and Novo, J.},
title = {Optimal Bounds for Numerical Approximations of Infinite Horizon Problems Based on Dynamic Programming Approach},
journal = {SIAM Journal on Control and Optimization},
volume = {61},
number = {2},
pages = {415-433},
year = {2023},
}

@article{
F,
author = {Falcone, M.},
title = {A Numerical Approach to the Infinite Horizon Problem of Deterministic Control Theory},
journal = {Applied Mathematics and Optimization},
volume = {15},
pages = {1-13},
year = {1987}
}

@article{
Fec,
author = {Falcone, M.},
title = {Corrigenda: A Numerical Approach to the Infinite Horizon Problem of
Deterministic Control Theory},
journal = {Applied Mathematics and Optimization},
volume = {23},
pages = {213-214},
year = {1991}
}

@article{
AFS,
author = {Alla, A. and Falcone, M. and Saluzzi, L.},
title = {An Efficient DP Algorithm on a Tree-Structure for Finite Horizon Optimal Control Problems},
journal = {SIAM Journal on Scientific Computing},
volume = {41},
number = {4},
pages = {A2384-A2406},
year = {2019}
}

@incollection{
falconegiorgi,
author = {Falcone, M. and Giorgi, T.},
title = {An Approximation Scheme for Evolutive Hamilton-Jacobi Equations},
booktitle = {Stochastic Analysis, Control, Optimization and Applications},
publisher = {Birkhäuser},
address = {Boston},
pages = {289-303},
year = {1999}
}

@article{
CL,
author = {Crandall, M.G. and Lions, P.-L.},
title = {Viscosity solutions of Hamilton-Jacobi equations},
journal = {Transaction of the American Mathematical Society},
volume = {277},
number = {1},
pages = {1-42},
year = {1983}
}

@book{
bardicapuzzodolcetta,
author = {Bardi, M. and Capuzzo-Dolcetta, I.},
title = {Optimal Control and Viscosity Solutions of Hamilton-Jacobi-Bellman Equations},
publisher = {Birkhäuser},
address ={Boston},
year = {1997}
}

@book{
falconeferretti,
author = {Falcone, M. and Ferretti, R.},
title = {Semi-Lagrangian Approximation Schemes for Linear and Hamilton-Jacobi Equations},
publisher = {SIAM},
address ={Philadelphia},
year = {2014}
}

@article{
S,
author = {Sirovich, L.},
title = {Turbulence and the dynamics of coherent structures. Part I: Coherent structures},
journal = {Quarterly of Applied Mathematics},
volume = {45},
pages = {561-571},
year = {1987}
}

@article{
K,
author = {Kružkov, S.N.},
translator = {Smith, S.},
origlanguage = {russo},
title = {Generalized solutions of the Hamilton–Jacobi equations of eikonal type. I. Formulation of the problems; existence, uniqueness and stability theorems; some properties of the solutions},
journal = {Mathematics of the USSR-Sbornik},
volume = {27},
number = {3},
pages = {406-446},
year = {1975}
}

@book{
bellman,
author = {Bellman, R.},
title = {Dynamic Programming},
publisher = {Princeton University Press},
address ={Princeton},
year = {1957}
}


\end{document}